\documentclass[12pt]{amsart}
\linespread{1.25}
\usepackage{amsmath}
\usepackage{amsthm}
\usepackage{amscd}
\usepackage{setspace}
\usepackage{enumerate}

\addtolength{\oddsidemargin}{-.30in}
\addtolength{\evensidemargin}{-.30in}
\addtolength{\textwidth}{.60in}

\newcommand{\R}{{\mathbb R}}

\newcommand{\C}{{\mathbb C}}

\newtheorem{theorem}{Theorem}[section] 
\newtheorem{lemma}{Lemma}[section]            
\newtheorem{corollary}{Corollary}      
\newtheorem{proposition}[theorem]{Proposition}
%

\theoremstyle{definition}
\newtheorem*{acknowledgement}{Acknowledgement}


\newtheorem{definition}{Definition}
\numberwithin{definition}{section}


\theoremstyle{remark}
\newtheorem{remark}{Remark}


\begin{document}

\title[ Spectral strong multiplicity one]{On a spectral analogue of the 
 strong \\ multiplicity one theorem }

\date{}

\author{Chandrasheel Bhagwat}
\author{C. S. Rajan}
\address{Tata Institute of Fundamental Research\\
Homi Bhabha Road\\
Mumbai 400005, India.}

\email{chandra@math.tifr.res.in, rajan@math.tifr.res.in}

\begin{abstract}  We prove spectral analogues of the classical strong
multiplicity one theorem  for newforms. Let  $\Gamma_1$ and $\Gamma_2$
be uniform lattices in a semisimple group $G$. Suppose all but finitely
many irreducible unitary representations (resp. spherical)  of $G$
occur with equal  multiplicities in $L^{2}(\Gamma_1 \backslash G)$ and
$L^{2}(\Gamma_2 \backslash G)$. Then  $L^{2}(\Gamma_1 \backslash G)
\cong L^{2}(\Gamma_2 \backslash G)$ as  $G$ - modules (resp. the
spherical spectra of $L^{2}(\Gamma_1 \backslash G)$ and
$L^{2}(\Gamma_2 \backslash G)$ are equal).
\end{abstract}

\maketitle

\section{\bf{Introduction}}\label{intro}

The beginnings of the analogy between the spectrum and arithmetic of
Riemannian locally symmetric spaces can be attributed to Maass, who
defined non-analytic modular forms as eigenfunctions of the Laplacian
satisfying suitable modularity and growth conditions.  From the
viewpoint of Gelfand, the theory of Maass forms can be re-interpreted
in terms of the representation theory of $PSL(2,\R)$  on $L^{2}(\Gamma
\backslash PSL(2,\R))$ for a lattice $\Gamma$ in
$PSL(2,\R)$. Subsequently, the analogy between the spectrum  and
arithmetic has been extended by the work of A. Selberg, P. Sarnak,
M. F. Vigneras  and T. Sunada amongst others.

 In this paper, our aim is to establish an analogue in the spectral
context of the classical strong multiplicity one theorem for cusp
forms. Suppose $f$ and $g$ are newforms for some Hecke congruence
subgroup $\Gamma_{0}(N)$ such that the eigenvalues of the Hecke
operator at a prime $p$ are equal for all but finitely many primes
$p$. Then the strong multiplicity one theorem of Atkin and Lehner
states that $f$ and $g$ are equal (\textit{cf}. ~\cite[p.125]{La}).

Now, let $G$ be a semisimple Lie group and $\Gamma$ be a uniform
lattice (a discrete cocompact subgroup) in $G$. Let $R_{\Gamma}$ be
the right regular representation of $G$ on $L^2(\Gamma \backslash G)$:

$$R_{\Gamma}(g)(\phi)(y) = \phi(yg)\quad \forall\  g,~y \in G\ \text{and}\ \phi \in\ L^2(\Gamma \backslash G) $$ 

This defines a unitary representation of $G$. It is known
(\cite[p.23]{GGP}) that $R_{\Gamma}$ decomposes discretely as a direct
sum of irreducible unitary representations of $G$ occurring with
finite multiplicities. Let $\widehat{G}$ be the set of equivalence
classes of irreducible unitary representations of $G$. For $\pi \in
\widehat{G}$,  let $m(\pi, \Gamma )$\ be the multiplicity of $\pi$ in
$R_{\Gamma}$. 

\begin{definition}Let $\Gamma_1$ and $\Gamma_2$ be uniform lattices in
$G$.  The lattices $\Gamma_1$ and $\Gamma_2$ are said to be
representation equivalent in $G$ if 
$$L^{2}(\Gamma_1 \backslash G) \cong L^{2}(\Gamma_2 \backslash G)$$
 as representations of $G$, i.e. for every $\pi\ \in\ \widehat{G}$, 
\[m(\pi, \Gamma_1 ) = m(\pi, \Gamma_2)\]
\end{definition}

In this article we prove the following result :

\begin{theorem}\label{thm1} Let $\Gamma_1 $ and $\Gamma_2 $  be
uniform lattices in a semisimple Lie group $G$.  Suppose all but
finitely many irreducible unitary representations of $G$ occur with
equal multiplicities in  $L^{2}(\Gamma_1 \backslash G)$ and $L^{2}
(\Gamma_2 \backslash G)$. Then the lattices $\Gamma_1$ and $\Gamma_2$
are representation equivalent in $G$.
\end{theorem}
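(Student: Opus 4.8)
The plan is to compare the two representations through the Selberg trace formula and to localize it near the identity so that the geometric side collapses to a single term. Write $D(\pi) = m(\pi,\Gamma_1) - m(\pi,\Gamma_2)$; by hypothesis $D$ is supported on a finite set $S \subset \widehat{G}$, and the goal is to show $D \equiv 0$. For $f \in C_c^\infty(G)$ the operators $R_{\Gamma_i}(f)$ are trace class (their kernels $\sum_{\gamma\in\Gamma_i} f(x^{-1}\gamma y)$ are smooth on the compact space $\Gamma_i\backslash G \times \Gamma_i\backslash G$), and the trace formula reads
$$\sum_{\pi\in\widehat{G}} m(\pi,\Gamma_i)\,\Theta_\pi(f) \;=\; \sum_{[\gamma]} \mathrm{vol}(\Gamma_{i,\gamma}\backslash G_\gamma)\int_{G_\gamma\backslash G} f(x^{-1}\gamma x)\,dx,$$
where $\Theta_\pi(f) = \mathrm{tr}\,\pi(f)$ and the sum runs over $\Gamma_i$-conjugacy classes. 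Since both spectral sums converge absolutely, subtracting the two identities and cancelling the terms with $\pi\notin S$ leaves the finite spectral expression $\sum_{\pi\in S} D(\pi)\Theta_\pi(f)$ on the left for every $f$.

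The first step is to kill the non-identity part of the geometric side. Because $\Gamma_i$ is discrete and cocompact in the semisimple group $G$, every nontrivial element is semisimple (there are no nontrivial unipotents), every elliptic element has finite order bounded uniformly (an infinite-order elliptic element would generate a non-discrete subgroup), and the hyperbolic elements have translation lengths bounded below by the systole of the compact manifold $\Gamma_i\backslash G/K$. Hence there is a uniform $\epsilon>0$ such that every $\gamma\ne e$ in $\Gamma_1\cup\Gamma_2$ has an eigenvalue at distance $\ge\epsilon$ from $1$, and the conjugation-invariant neighborhood $U = \{g\in G:\ \text{all eigenvalues of } g \text{ lie within } \epsilon/2 \text{ of } 1\}$ meets no nontrivial $G$-conjugacy class of either lattice. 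I would choose $f$ supported in $U$; then $\int_{G_\gamma\backslash G} f(x^{-1}\gamma x)\,dx = 0$ for all $\gamma\ne e$, the geometric side reduces to the identity term $\mathrm{vol}(\Gamma_i\backslash G)\,f(e)$, and subtraction gives
$$\sum_{\pi\in S} D(\pi)\,\Theta_\pi(f) \;=\; \bigl(\mathrm{vol}(\Gamma_1\backslash G) - \mathrm{vol}(\Gamma_2\backslash G)\bigr)\,f(e) \qquad (f\in C_c^\infty(U)).$$

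The second step exploits a regularity mismatch. By Harish-Chandra's regularity theorem each $\Theta_\pi$ is a locally integrable function, so the left-hand side is integration against $h := \sum_{\pi\in S} D(\pi)\Theta_\pi \in L^1_{loc}(U)$, whereas the right-hand side is a multiple of the Dirac mass $\delta_e$. Since a nonzero multiple of $\delta_e$ is never represented by an $L^1_{loc}$ function, both sides must vanish: the covolumes agree and, more importantly, $\sum_{\pi\in S} D(\pi)\Theta_\pi = 0$ as a distribution on $U$. Note this recovers the expected equality of covolumes for free, with no separate Weyl-law computation.

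It then remains to prove the \emph{linear independence of the characters $\{\Theta_\pi:\pi\in S\}$ on the neighborhood $U$ of the identity}, and this is the step I expect to be the main obstacle: global linear independence of irreducible characters is classical, but I must upgrade it to independence of germs at $e$. My approach is first to split $S$ by infinitesimal character, applying elements of the center $Z(\g)$ of the universal enveloping algebra, which act on $\Theta_\pi$ by the scalar $\chi_\pi$, and using a Vandermonde argument in the finitely many distinct scalars $\{\chi_\pi\}$ to separate the relation into $\sum_{\pi\in S_\chi} D(\pi)\Theta_\pi = 0$ near $e$ for each $\chi$. Within a fixed $\chi$ there are only finitely many $\pi$, and restricting to the Cartan subgroups one writes each $\Theta_\pi$, via Harish-Chandra, as a fixed Weyl denominator dividing an exponential sum whose exponents lie in the single Weyl orbit determined by $\chi$; the distinct exponentials are linearly independent as real-analytic germs at $e$, converting the vanishing of $\sum D(\pi)\Theta_\pi$ into a linear system. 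Carrying this out uniformly over \emph{all} Cartan subgroups, so as to capture the full global character rather than merely its restriction to one Cartan, is the delicate point, and is where Harish-Chandra's description of invariant eigendistributions on the regular set does the real work; it forces $D(\pi)=0$, i.e. representation equivalence. The spherical variant is identical, applied to $K$-bi-invariant $f$ and the spherical subspaces.
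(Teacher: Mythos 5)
Your first two steps (absolute convergence, subtraction, reduction to a finite spectral sum, and the use of Harish--Chandra's regularity theorem to rule out a Dirac mass on the right) are sound, and the observation that equality of covolumes comes out for free is a nice one. But the argument has a genuine gap exactly where you flag it: by localizing the trace formula only near the identity, you reduce the theorem to the assertion that the distributions $\{\Theta_\pi : \pi \in S\}$ are linearly independent \emph{on a small neighborhood of $e$}, and this local independence is never proved. Your sketch (separate by infinitesimal character, then compare exponential sums over Weyl denominators on each Cartan) does not close the gap: vanishing of the germ at $e$ gives vanishing of the character on the connected components of $H\cap G^{\mathrm{reg}}$ that touch $e$, but for a Cartan subgroup $H$ with disconnected regular set (already for the split Cartan of $SL(2,\mathbb{R})$ there are components near $-I$ that do not meet any neighborhood of $e$) analytic continuation along $H$ does not reach the other components, and $G^{\mathrm{reg}}$ itself need not be connected. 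So "independence of germs at $e$" is a substantially harder statement than global independence, and as written your proof is conditional on it. The paper's argument sidesteps this entirely by localizing differently: the set $E=\bigcup_{\gamma\in\Gamma_1\cup\Gamma_2}[\gamma]_G$ of \emph{all} $G$-conjugacy classes of lattice elements is closed and of measure zero in $G$ (each class is a closed submanifold of positive codimension, and only finitely many meet any compact set), so one may test against $f$ supported near \emph{any} point of the full-measure complement of $E$; this forces the locally integrable function $\sum_{\pi\in S}t_\pi\Theta_\pi$ to vanish almost everywhere on all of $G$, after which the standard \emph{global} linear independence of irreducible characters (Theorem 10.6 in Knapp) finishes the proof. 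Restricting to a neighborhood of $e$ throws away the information that makes the easy version of linear independence sufficient.

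A secondary, fixable issue: your construction of the invariant neighborhood $U$ via a uniform eigenvalue gap is not justified as stated. In an irreducible uniform lattice in a product such as $SL(2,\mathbb{R})\times SL(2,\mathbb{R})$, elements can have elliptic components of infinite order whose rotation angles accumulate at $0$, so "every elliptic element has finite order bounded uniformly" does not control all eigenvalues, and the claimed uniform $\epsilon$ requires an argument. The clean route (used in the paper) is the compactness argument: writing $G=\Gamma D$ with $D$ relatively compact, any conjugate of $\gamma$ lying in a compact set $\Omega$ forces a $\Gamma$-conjugate of $\gamma$ into the finite set $D\Omega D^{-1}\cap\Gamma$, so only finitely many classes meet $\Omega$; since each nontrivial class is closed and omits $e$, a suitable neighborhood of $e$ avoids them all. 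If you want to salvage your near-identity strategy you would need both this and a genuine proof of local character independence; adopting the paper's measure-zero localization makes both problems disappear.
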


The proof of Theorem ~\ref{thm1} uses the Selberg trace formula and
fundamental results of Harish Chandra on the character distributions
of irreducible unitary  representations of $G$; in particular, we make
crucial use of a deep and difficult result  of Harish Chandra that the
character distribution of an irreducible unitary representation of $G$
is given by a locally  integrable function on $G$.  
\par We now consider an analogue of Theorem ~\ref{thm1} for the
spherical spectrum of uniform lattices.  Let $K$ be a maximal compact
subgroup of $G$. An irreducible unitary  representation $\pi$ of $G$
is said to be spherical if there exists a non-zero vector $v \in \pi$
such that
\[ \pi(k) v = v\quad \forall \ k \in K.\] The spherical spectrum
$\widehat{G}_s$ of   $G$ is the subset of $\widehat{G}$ consisting of
equivalence classes of irreducible unitary spherical  representations
of $G$.

\begin{theorem}\label{prin} Let $G$ be a connected, semisimple Lie
group. Suppose $\Gamma_1$, $\Gamma_2$ are uniform  torsion-free
lattices in $G$ such that 
$$ m(\pi, \Gamma_1 ) =  m(\pi, \Gamma_2 )$$ 
for all but finitely many representations $\pi$ in $\widehat{G}_s$. 
Then $$ m(\pi, \Gamma_1 ) =  m(\pi, \Gamma_2 )$$ for all
representations $\pi$ in $\widehat{G}_s$. 
\end{theorem}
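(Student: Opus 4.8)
The plan is to run the Selberg trace formula with bi-$K$-invariant test functions, for which only the spherical spectrum contributes to the spectral side, and then to exploit that a finite combination of spherical functions is real-analytic while the geometric side is concentrated near the identity coset. Write $\Delta(\pi) = m(\pi,\Gamma_1) - m(\pi,\Gamma_2)$ and let $S = \{\pi\in\widehat{G}_s : \Delta(\pi)\neq 0\}$, which is finite by hypothesis; the goal is to show $S = \emptyset$. First I would apply the trace formula to each $\Gamma_i$ for a test function $f\in C_c^\infty(G)$ that is bi-$K$-invariant. For such $f$ the operator $\pi(f)$ has image in the space $\pi^K$ of $K$-fixed vectors and kills its orthogonal complement, so $\mathrm{tr}\,\pi(f) = 0$ unless $\pi$ is spherical, and for spherical $\pi$ (where $\dim\pi^K = 1$) one has $\mathrm{tr}\,\pi(f) = \int_G f(g)\varphi_\pi(g)\,dg$ with $\varphi_\pi$ the elementary spherical function attached to $\pi$. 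Subtracting the two trace formulas, the spectral side collapses to the finite sum $\sum_{\pi\in S}\Delta(\pi)\,\mathrm{tr}\,\pi(f) = \int_G f(g)\,T(g)\,dg$, where $T = \sum_{\pi\in S}\Delta(\pi)\varphi_\pi$ is a bi-$K$-invariant, real-analytic function on $G$ with $T(e) = \sum_{\pi\in S}\Delta(\pi)$.

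On the geometric side the difference is a sum of orbital integrals over the conjugacy classes of $\Gamma_1$ and $\Gamma_2$. The key point, and the step I expect to be the main obstacle, is to isolate the identity contribution: because each $\Gamma_i$ is torsion-free and cocompact, no non-identity element is elliptic, and a compactness argument on a fundamental domain shows that the displacement $\inf\{d(x,\gamma x): x\in G/K,\ \gamma\in(\Gamma_1\cup\Gamma_2)\setminus\{e\}\}$ is strictly positive. Hence there is a bi-$K$-invariant neighborhood $V$ of $K$ in $G$ meeting no conjugate of a non-identity element of $\Gamma_1\cup\Gamma_2$, so for $f$ supported in $V$ the entire geometric difference reduces to the identity terms $(\mathrm{vol}(\Gamma_1\backslash G) - \mathrm{vol}(\Gamma_2\backslash G))\,f(e)$. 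Passing to the quotient $K\backslash G/K$, identified via the Cartan decomposition with the closed Weyl chamber, this says that near the origin the real-analytic function $T$ coincides with $(\mathrm{vol}(\Gamma_1\backslash G) - \mathrm{vol}(\Gamma_2\backslash G))$ times the point mass at the origin.

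Since a continuous (indeed real-analytic) function defines a distribution with no point-mass component, matching the two sides forces both $\mathrm{vol}(\Gamma_1\backslash G) = \mathrm{vol}(\Gamma_2\backslash G)$ and $T\equiv 0$ on $V$. As $T$ is real-analytic and $G$ is connected, vanishing on the open set $V$ propagates to give $T\equiv 0$ on all of $G$, that is, $\sum_{\pi\in S}\Delta(\pi)\varphi_\pi = 0$. Finally, distinct irreducible spherical representations have distinct Harish-Chandra parameters and hence linearly independent spherical functions, so every $\Delta(\pi)$ with $\pi\in S$ must vanish, contradicting the definition of $S$ unless $S=\emptyset$. This yields $m(\pi,\Gamma_1)=m(\pi,\Gamma_2)$ for all $\pi\in\widehat{G}_s$. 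The delicate points to verify carefully are the positivity of the displacement, which is exactly where torsion-freeness and cocompactness enter, and the attendant reduction, for $f$ supported near $K$, of the geometric side to the identity term alone.
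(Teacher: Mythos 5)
Your proposal is correct, and its overall architecture coincides with the paper's: bi-$K$-invariant test functions so that only the spherical spectrum survives on the spectral side, the identification $\chi_\pi(f)=\int_G f\varphi_\pi$, real-analyticity of the finite combination $T=\sum_{\pi\in S}\Delta(\pi)\varphi_\pi$, vanishing on an open set propagating to all of the connected group $G$, and linear independence of elementary spherical functions to kill each coefficient. The one place where you genuinely diverge is the geometric localization, which is exactly the step the paper isolates as the delicate point (its Proposition \ref{B}). The paper argues topologically: a relatively compact $K\times K$-stable neighborhood $U=KU'K$ of $e$ meets only finitely many classes $C_\gamma$, each saturation $KC_\gamma K$ is closed and (by torsion-freeness) misses $e$, and one then takes $B=KVK\cap K^c$ --- crucially excising $K$ itself so that the identity orbital integral $f(e)$ also vanishes and the geometric side is identically zero. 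You instead produce the $K\times K$-stable neighborhood metrically, via positivity of the minimal displacement $\inf_{x,\gamma\neq e}d(x,\gamma x)$ on the compact quotients (equivalent content: $\{g: d(o,go)<\delta\}$ is a bi-$K$-invariant neighborhood of $K$ avoiding all conjugates), and you keep the identity term, disposing of it by the observation that a continuous function cannot match a point mass against test functions concentrating on the measure-zero set $K$. Both routes are sound; yours has the small advantages of making the role of torsion-freeness/cocompactness very concrete and of yielding $\mathrm{vol}(\Gamma_1\backslash G)=\mathrm{vol}(\Gamma_2\backslash G)$ as a by-product, while the paper's excision of $K$ avoids having to argue about the point-mass component at all. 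The only care needed in your version is to carry out the point-mass argument within the restricted class of bi-$K$-invariant test functions supported in $V$ (shrinking bi-$K$-invariant neighborhoods of $K$ with $f(e)=1$ force the coefficient of $\delta_e$ to vanish since $K$ has Haar measure zero), but this is routine.
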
 The proof of this theorem follows the broad outline of
Theorem \ref{thm1}, but requires a more delicate control of $K\times
K$-saturation of a conjugacy class of an element of $\Gamma$ (see
Proposition \ref{B}). This is achieved by looking at the behaviour of
the conjugacy classes of elements of $\Gamma$ in a neighbourhood of
identity.  

We now relate the spherical spectrum with the spectrum of
$G$-invariant differential operators on the associated symmetric space
$X = G/K$. For a torsion-free uniform lattice $\Gamma$ in $G$, let
$X_{\Gamma} = \Gamma \backslash G / K$ be the associated compact
Riemannian locally symmetric space.  The space of smooth functions on
$X_{\Gamma}$ can be considered as  the space of smooth functions on
$X$ invariant under the action of $\Gamma$. Let $D(G / K)$ be the
algebra of $G$-invariant differential operators on $X$.  For a
character $\lambda$ of $D(G/K)$ (i.e. an algebra homomorphism of
$D(G/K)$ into $\C$), consider the eigenspace of $\lambda$, 
\begin{equation} \label{V} V(\lambda,\Gamma) = \left\{f \in
C^{\infty}(X_{\Gamma})\ :\ D(f) = \lambda(D)(f)\quad \forall\ D \in
D(G / K) \right\}. 
\end{equation} It is known that the space $V(\lambda,\Gamma)$ is of
finite dimension (see Section ~\ref{prooflaplacian}).

\begin{definition} Let $\Gamma_1$ and $\Gamma_2$ be torsion-free
uniform lattices in $G$.  The locally symmetric spaces $X_{\Gamma_1} =
\Gamma_1 \backslash G / K$ and $X_{\Gamma_2} = \Gamma_2 \backslash G /
K$  are said to be \textit{compatibly isospectral} if 
$$\text{dim} ( V (\lambda, \Gamma_1) ) = \text{dim}
\ ( V (\lambda, \Gamma_2) )$$ for every character $\lambda$
of $D(G / K)$.
\end{definition}

\begin{remark} From the generalized Sunada criterion proved by Berard
\cite[p.566]{Be} and DeTurck - Gordon \cite{DG}, it can be seen that
if two uniform lattices in $G$ are representation equivalent, then
the associated compact locally symmetric Riemannian spaces
$X_{\Gamma_1}$ and  $X_{\Gamma_2}$ are compatibly isospectral. 
\end{remark}

We prove the following result in Section ~\ref{prinlapla} :

\begin{theorem}\label{laplacian}  Let $G$ be a connected, semisimple
Lie group. Suppose $\Gamma_1$, $\Gamma_2$ are uniform  torsion-free
lattices in $G$. Suppose 
$$\text{dim}\ ( V (\lambda, \Gamma_1 )) = \text{dim}
 ( V (\lambda, \Gamma_2 ) )$$
for all but finitely many characters $\lambda$, then  $X_{\Gamma_1}$
and $X_{\Gamma_2} $  are compatibly isospectral.
\end{theorem}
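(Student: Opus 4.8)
The plan is to reduce Theorem~\ref{laplacian} to Theorem~\ref{prin}, by identifying the spherical spectrum of $\Gamma$ with the spectrum of characters of $D(G/K)$ acting on the eigenspaces $V(\lambda,\Gamma)$. The key is the well-known dictionary between spherical representations and characters of $D(G/K)$: by Harish-Chandra's theory, every irreducible spherical unitary representation $\pi\in\widehat{G}_s$ possesses (up to scalar) a unique $K$-fixed vector, and the algebra $D(G/K)$ acts on the associated spherical function by a character $\lambda_\pi$; conversely, by the classification of spherical functions, the assignment $\pi\mapsto\lambda_\pi$ is a bijection between $\widehat{G}_s$ and the set of characters $\lambda$ of $D(G/K)$ for which the corresponding spherical principal series constituent is unitary. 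Under this correspondence I would establish the equality of multiplicities with eigenspace dimensions.

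\begin{proof}[Plan of proof]
First I would make precise the correspondence between the spaces $V(\lambda,\Gamma)$ and the spherical spectrum. A smooth function $f$ on $X_\Gamma=\Gamma\backslash G/K$ lifts to a right-$K$-invariant, left-$\Gamma$-invariant function on $G$, hence to a $K$-fixed vector in $L^2(\Gamma\backslash G)$. The condition $Df=\lambda(D)f$ for all $D\in D(G/K)$ singles out, inside the decomposition $L^2(\Gamma\backslash G)=\bigoplus_{\pi\in\widehat{G}} m(\pi,\Gamma)\,\pi$, exactly those $\pi$-isotypic components whose $K$-fixed vectors transform under $D(G/K)$ by the character $\lambda$. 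Since a $K$-fixed vector can only occur in a spherical $\pi$, and such $\pi$ carries a one-dimensional space of $K$-fixed vectors on which $D(G/K)$ acts through $\lambda_\pi$, I would obtain the identity
\[
\dim V(\lambda,\Gamma)=\sum_{\substack{\pi\in\widehat{G}_s\\ \lambda_\pi=\lambda}} m(\pi,\Gamma).
\]
The crucial structural fact I would invoke is that the fibers of $\pi\mapsto\lambda_\pi$ are finite: for a fixed character $\lambda$ only finitely many $\pi\in\widehat{G}_s$ satisfy $\lambda_\pi=\lambda$. This follows because the infinitesimal character is determined by $\lambda$ (via the Harish-Chandra isomorphism $D(G/K)\cong S(\mathfrak{a})^W$), and there are only finitely many irreducible unitary representations with a given infinitesimal character.

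Next I would transfer the hypothesis. Suppose $\dim V(\lambda,\Gamma_1)=\dim V(\lambda,\Gamma_2)$ for all but finitely many characters $\lambda$. Using the displayed identity, I want to conclude that $m(\pi,\Gamma_1)=m(\pi,\Gamma_2)$ for all but finitely many $\pi\in\widehat{G}_s$, so that Theorem~\ref{prin} applies and yields equality for all spherical $\pi$; feeding this back through the identity then gives $\dim V(\lambda,\Gamma_1)=\dim V(\lambda,\Gamma_2)$ for every $\lambda$, i.e. compatible isospectrality. The transfer of the hypothesis uses that the exceptional set of characters is finite and that each such character has only finitely many spherical $\pi$ over it; hence the set of $\pi\in\widehat{G}_s$ lying over the exceptional characters is finite, and outside this finite set the spherical multiplicities of $\Gamma_1$ and $\Gamma_2$ agree.

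The main obstacle I anticipate is the passage from the hypothesis on the finite-codimension family of characters $\lambda$ to the hypothesis on the spherical representations $\pi$ required by Theorem~\ref{prin}, and conversely from the conclusion of Theorem~\ref{prin} back to equality of all eigenspace dimensions. The delicate point is to rule out cancellation: a priori, for a fixed $\lambda$ one has only the \emph{sum} of multiplicities over the fiber, so individual spherical multiplicities could differ while the sums agree. This is precisely why Theorem~\ref{prin} is indispensable --- it is what promotes equality of the pooled (eigenspace) data to equality of the individual spherical multiplicities. I would therefore structure the argument so that Theorem~\ref{prin} does the heavy lifting at the level of $\widehat{G}_s$, while the finiteness of the fibers of $\pi\mapsto\lambda_\pi$ guarantees that ``finitely many exceptional $\lambda$'' corresponds to ``finitely many exceptional $\pi$'' in both directions.
\end{proof}
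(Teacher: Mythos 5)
Your overall strategy is the same as the paper's: reduce to Theorem~\ref{prin} via the duality between spherical multiplicities and eigenspace dimensions (the paper's Proposition~\ref{pro1}, following Gelfand--Graev--Pyatetskii-Shapiro). But there is a genuine gap at exactly the point you flag as ``the main obstacle'' and then do not close. You write $\dim V(\lambda,\Gamma)=\sum_{\lambda_\pi=\lambda} m(\pi,\Gamma)$ and justify only that the fibers of $\pi\mapsto\lambda_\pi$ are \emph{finite} (via infinitesimal characters). With that alone, the hypothesis of Theorem~\ref{laplacian} gives, for each non-exceptional $\lambda$, only equality of the \emph{sums} of multiplicities over the fiber, which does not yield $m(\pi,\Gamma_1)=m(\pi,\Gamma_2)$ for all but finitely many $\pi\in\widehat{G}_s$ --- and that individual equality is precisely the \emph{input} Theorem~\ref{prin} requires. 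Your proposed resolution, that ``Theorem~\ref{prin} does the heavy lifting'' and promotes pooled data to individual multiplicities, is backwards: Theorem~\ref{prin} cannot be invoked until the cancellation problem is already solved. (Your preamble does assert that $\pi\mapsto\lambda_\pi$ is a bijection onto its image, but the body of the plan contradicts this by treating the fibers as possibly larger than a point, so the injectivity is never actually established or used.)

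The missing ingredient, which is what the paper's Proposition~\ref{pro1} supplies, is that the fibers are \emph{singletons}: a spherical function is uniquely determined by its $D(G/K)$-eigencharacter together with the normalization $\phi(e)=1$ (equivalently, under the Harish-Chandra parametrization $\phi_\lambda=\phi_\mu$ iff $\lambda\in W\mu$, which is also when the eigencharacters agree), and an irreducible unitary spherical representation is determined up to equivalence by its elementary spherical function; the paper phrases this as ``inequivalent spherical representations have linearly independent spherical functions, hence distinct characters $\lambda_\pi$.'' With injectivity in hand one gets the clean identity $\dim V(\lambda_\pi,\Gamma)=m(\pi,\Gamma)$ (and $\dim V(\lambda,\Gamma)=0$ if $\lambda$ is not some $\lambda_\pi$), after which the two-way transfer between ``finitely many exceptional $\lambda$'' and ``finitely many exceptional $\pi$'' is immediate and Theorem~\ref{prin} finishes the argument. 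You should replace the finiteness-of-fibers claim by this injectivity statement and prove it; the rest of your reduction then goes through as in the paper.
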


If $X$ is of rank one, the algebra $D(G / K)$ is the polynomial
algebra in the  Laplace-Beltrami operator $\Delta$ on $G/K$ (see
~\cite[p.397]{He}). Hence the eigenvalues of $\Delta$  determine the
characters of $D(G / K)$. Consequently we get :

\begin{corollary}\label{lapla} Let $X_1$ and $X_2$ be two locally
symmetric Riemannian spaces of rank one and $\Delta_1$, $\Delta_2$ be
the Laplace-Beltrami operators acting on the space of smooth functions
on $X_1$ and $X_2$ respectively. If all but finitely many eigenvalues
occur with equal multiplicities in the spectra of $\Delta_1$ and
$\Delta_2$, then the spaces are isospectral with respect to the
Laplace-Beltrami operators. 
\end{corollary}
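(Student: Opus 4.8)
The plan is to deduce Corollary~\ref{lapla} from Theorem~\ref{laplacian} by setting up a dictionary between eigenvalues of the Laplace--Beltrami operator and characters of $D(G/K)$ in the rank one case. First I would realize each $X_i$, a compact rank one locally symmetric space, as $X_{\Gamma_i} = \Gamma_i\backslash G/K$ for a uniform torsion-free lattice $\Gamma_i$ in a common connected semisimple rank one Lie group $G$; granting this, we are exactly in the setting of Theorem~\ref{laplacian}. (The justification that a single $G$ serves both $X_1$ and $X_2$ is deferred to the last paragraph.)

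Next I would invoke the rank one hypothesis: as cited from~\cite[p.397]{He}, $D(G/K) = \C[\Delta]$ is the polynomial algebra generated by the Laplace--Beltrami operator. Consequently every character $\lambda$ of $D(G/K)$ is determined by the single scalar $\lambda(\Delta)\in\C$, and the assignment $\lambda \mapsto \lambda(\Delta)$ is a bijection from the set of characters onto $\C$. Under this bijection the eigenspace $V(\lambda,\Gamma_i)$ of \eqref{V} reduces, since $\Delta f = \lambda(\Delta) f$ forces $p(\Delta)f = p(\lambda(\Delta))f$ for every $p$, to the $\lambda(\Delta)$-eigenspace of $\Delta_i$ on $C^{\infty}(X_{\Gamma_i})$. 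Hence $\dim V(\lambda,\Gamma_i)$ equals the multiplicity of $\lambda(\Delta)$ in the spectrum of $\Delta_i$, and it is $0$ whenever $\lambda(\Delta)$ is not an eigenvalue, in particular whenever $\lambda(\Delta)\notin\R$.

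The crux is then that the exceptional sets match up. Because distinct eigenvalues correspond to distinct characters, the hypothesis that all but finitely many eigenvalues occur with equal multiplicities in the spectra of $\Delta_1$ and $\Delta_2$ says precisely that $\dim V(\lambda,\Gamma_1) = \dim V(\lambda,\Gamma_2)$ for all but finitely many characters $\lambda$; the characters whose value on $\Delta$ is not an eigenvalue of either operator contribute $0$ to both sides and cause no trouble. Theorem~\ref{laplacian} now applies verbatim and gives $\dim V(\lambda,\Gamma_1) = \dim V(\lambda,\Gamma_2)$ for \emph{every} character $\lambda$, i.e.\ $X_{\Gamma_1}$ and $X_{\Gamma_2}$ are compatibly isospectral. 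Reading this back through the bijection, every eigenvalue of $\Delta$ occurs with the same multiplicity for $X_1$ and for $X_2$, which is exactly the assertion that the two spaces are isospectral with respect to the Laplace--Beltrami operators.

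I expect no serious obstacle, as the statement is essentially a formal consequence of Theorem~\ref{laplacian}; the only point needing a line of care is that the finite exceptional set of eigenvalues corresponds to a finite exceptional set of characters, which is immediate from the injectivity of $\lambda\mapsto\lambda(\Delta)$ in rank one, together with the harmless bookkeeping for characters that never occur in either spectrum. The mildly delicate input, if one does not simply assume a common $G$, is the reduction carried out in the first paragraph: matching all but finitely many eigenvalues with multiplicity forces identical eigenvalue-counting asymptotics and hence, via the full heat-trace expansion and its curvature coefficients, a common universal cover $G/K$, so that both $X_i$ are quotients of the same rank one $G$.
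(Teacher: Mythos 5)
Your proposal is correct and follows exactly the paper's (very brief) derivation: in rank one $D(G/K)=\C[\Delta]$, so characters correspond bijectively to their values on $\Delta$, the eigenspaces $V(\lambda,\Gamma_i)$ become the $\Delta_i$-eigenspaces, and Corollary~\ref{lapla} is then immediate from Theorem~\ref{laplacian}. Your closing caveat about realizing both spaces as quotients of a common $G$ is a reasonable point of care, but the paper simply takes this as implicit in the statement, so no further argument is needed.
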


\begin{remark} Using an analytic version of the Selberg Trace formula,
J. Elstrodt, F. Grunewald, and J. Mennicke (on a suggestion of
M. F. Vigneras) proved Corollary ~\ref{lapla} for $G = PSL(2,\R)$ and
$G = PSL(2,\C)$ (\cite[Theorem 3.3, p.203]{EGM}).
\end{remark}

\begin{remark} When $G=PSL(2,\R)$, it can be seen that the spherical
spectrum of determines the full spectrum $L^2(\Gamma\backslash G)$
(\cite{Pe}). One can raise the question whether such a result will be
true in general. This fits in with the conjectures linking spectrum
and arithmetic in the context of automorphic forms  (see
\cite[Conjecture 3]{Ra}). 
\end{remark}

\begin{acknowledgement} We thank S. Kudla for raising the question of
proving an analogue of the strong multiplicity one theorem  in the
spectral case during the conference on `Modular forms' held at
Schiermonikoog, Netherlands in October 2006.  The second author thanks
the organizers of the conference for the invitation and warm
hospitality.\\
\end{acknowledgement}

\section{Preliminaries} \label{Prelim}

\subsection{Representations of semisimple groups  }\label{repgrp} We
recall some facts about representations of semisimple groups.  Let $G$
be a semisimple Lie group with a Haar measure $\mu$.    Let $\pi$ be a
unitary representation of $G$ on a Hilbert space $V$.  For a compactly
supported smooth function $f$ on $G$, define the convolution operator
$\pi(f)$ on $V$ as follows :

 \[\pi(f)(v) = \int\limits_G{f(g)\ \pi(g)v\ d\mu(g)}\]
 
This defines a bounded linear operator on $V$.  We recall the
following result from ~\cite[Theorem 10.2; p.334]{Kn} :

\begin{proposition}\label{trace} Let $G$ be a semisimple group and
$\pi$ be an irreducible unitary representation of $G$.  Then the
convolution operator $\pi(f)$ is of trace class for every compactly
supported smooth function $f$ on $G$.
\end{proposition}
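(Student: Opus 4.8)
The plan is to prove the stronger assertion that $\pi(f)$ is of trace class, by first showing that $\pi(f)$ is Hilbert--Schmidt for every $f \in C_c^\infty(G)$ and then passing to trace class through a factorization. The structural input I will invoke is Harish Chandra's admissibility theorem: with respect to a maximal compact subgroup $K$, the irreducible unitary representation $\pi$ decomposes as a Hilbert space direct sum $V = \bigoplus_{\tau \in \widehat{K}} V_\tau$ of $K$-isotypic components, each occurring with finite multiplicity $m_\tau$, and moreover $m_\tau \le \dim \tau$. I fix an orthonormal basis $\{v_i\}$ of $V$ subordinate to this decomposition, each $v_i$ lying in a single $K$-type, which I denote $\tau(i)$.

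The main device is an elliptic element $\mathcal{C} \in U(\g)$ constructed from the Casimir element $\Omega$ of $\g$ and the Casimir element $\Omega_K$ of $\fk$, using a basis adapted to the Cartan decomposition $\g = \fk \oplus \mathfrak{p}$ so that $\fk$ and $\mathfrak{p}$ contribute with opposite signs and $\mathcal{C}$ is elliptic and positive as a left-invariant differential operator on $G$. By Schur's lemma $\pi(\Omega)$ is a scalar, while $\pi(\Omega_K)$ acts on $V_\tau$ by $\langle \tau, \tau + 2\rho_K \rangle$; hence $\pi(\mathcal{C})$ acts on each $V_\tau$ by a positive scalar $\mu(\tau)$, and $\mu(\tau) \to \infty$, growing like $|\tau + \rho_K|^2$.

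The next step is integration by parts. For $X \in \g$, writing $\tilde{X}$ and $\hat{X}$ for the left- and right-invariant vector fields, the unimodularity of $G$ yields $\pi(\tilde{X} f) = -\,\pi(f)\pi(X)$ and $\pi(\hat{X} f) = \pi(X)\pi(f)$. Extending these to $U(\g)$ and applying $\mathcal{C}^N$ on both sides (with $f_N := \hat{\mathcal{C}}^N \tilde{\mathcal{C}}^N f \in C_c^\infty(G)$), I obtain for all $i,j$ the identity
$$\langle \pi(f)\,v_i,\, v_j \rangle = \mu(\tau(i))^{-N}\,\mu(\tau(j))^{-N}\,\langle \pi(f_N)\,v_i,\, v_j \rangle .$$
Since $\pi$ is unitary, $\|\pi(f_N)\|_{\mathrm{op}} \le \|f_N\|_{L^1} =: C_N < \infty$, a bound independent of $i$ and $j$. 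Summing the squared matrix coefficients over the basis gives
$$\|\pi(f)\|_{HS}^2 \ \le\ C_N^2 \Big( \sum_{\tau} m_\tau \, (\dim \tau)\, \mu(\tau)^{-2N} \Big)^{2},$$
and using $m_\tau \le \dim \tau$ together with Weyl's dimension formula ($\dim \tau$ is polynomial in $|\tau + \rho_K|$) against $\mu(\tau) \gtrsim |\tau + \rho_K|^2$, this series converges once $N$ is taken large enough. Thus $\pi(f)$ is Hilbert--Schmidt.

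Finally, to upgrade to trace class I use the Dixmier--Malliavin theorem: every $f \in C_c^\infty(G)$ is a finite sum $\sum_k f_1^{(k)} * f_2^{(k)}$ with $f_i^{(k)} \in C_c^\infty(G)$, so that $\pi(f) = \sum_k \pi(f_1^{(k)})\,\pi(f_2^{(k)})$ is a finite sum of products of two Hilbert--Schmidt operators, each of which is trace class; hence $\pi(f)$ is trace class. The main obstacle is the convergence of the $K$-type series, which is exactly where admissibility is indispensable: it supplies both the finiteness and the controlled polynomial growth of the multiplicities, and these must be balanced against the quadratic lower bound on the Casimir eigenvalues $\mu(\tau)$.
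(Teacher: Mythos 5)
This proposition is not proved in the paper at all: it is quoted as a known result from Knapp [Kn, Theorem 10.2], so there is no in-paper argument to compare against. Your proof is a correct reconstruction of essentially the standard argument behind that citation: Harish--Chandra admissibility with the bound $m_\tau \le \dim\tau$, the $K$-Casimir (or your elliptic combination $\mathcal{C}$ of $\Omega$ and $\Omega_K$) acting by scalars $\mu(\tau)\sim |\tau+\rho_K|^2$ on $K$-types, integration by parts to trade decay of matrix coefficients against derivatives of $f$, and summation over $\widehat K$ using Weyl's dimension formula. Two small remarks. First, the positivity of all the scalars $\mu(\tau)$ is not automatic as stated (the $G$-Casimir contributes a $\pi$-dependent constant), but replacing $\mathcal{C}$ by $1+\mathcal{C}$, or simply using $1+\Omega_K$ alone --- ellipticity on $G$ and the operator $\Omega$ play no role in your estimate, only the spectral data on $K$-types do --- fixes this trivially. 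Second, your final step via Dixmier--Malliavin is correct but heavier than necessary: since $\pi(f)=A\,\pi(\hat{\mathcal{C}}^{2N}f)$ where $A$ acts by $\mu(\tau)^{-2N}$ on $V_\tau$, the operator $A$ is itself trace class for $N$ large (eigenvalue $\mu(\tau)^{-2N}$ with multiplicity $m_\tau\dim\tau$, a summable family), and a trace-class operator times a bounded one is trace class; this is how the cited source concludes. Finally, as with the paper's own citation, the admissibility input requires $G$ to have finite center (or be in Harish--Chandra's class), an implicit hypothesis you share with the paper rather than a defect of your argument.
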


Let $\pi$ be an irreducible unitary representation of $G$. Let
$C_{c}^{\infty}(G) $ be the space of  compactly supported smooth
functions on $G$.  Define the character distribution $\chi_{\pi}$ by,
$$\chi_{\pi}(f) = \text{trace}~(\pi(f)) \quad \forall f \in C_{c}^{\infty}(G). $$

\subsection{Some results of Harish Chandra}\label{HC} 
We recall some results of Harish Chandra on the characters of
irreducible unitary representations of $G$.

\begin{theorem}\label{LinInd} ~\cite[Theorem 10.6; p.336]{Kn}  Let
$\left\{\pi_i\right\}$ be a finite collection of mutually
inequivalent irreducible unitary representations of $G$.  Then their
characters $\left\{\chi_{\pi_i}\right\}$ are linearly  independent
distributions on $C_{c}^{\infty}(G)$. 
\end{theorem}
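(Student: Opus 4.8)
The plan is to prove the statement directly. Suppose we are given a linear relation $\sum_{i=1}^{n} c_i \chi_{\pi_i} = 0$; unwinding the definition of the character distributions, this says $\sum_{i=1}^{n} c_i \operatorname{trace}(\pi_i(f)) = 0$ for every $f \in C_{c}^{\infty}(G)$. To conclude that all $c_j$ vanish, it suffices to produce, for each fixed index $j$, a single test function $\tilde f \in C_{c}^{\infty}(G)$ that is detected only by $\pi_j$: one with $\pi_i(\tilde f) = 0$ for all $i \neq j$ and $\operatorname{trace}(\pi_j(\tilde f)) \neq 0$. Indeed, substituting such an $\tilde f$ into the relation collapses it to $c_j \operatorname{trace}(\pi_j(\tilde f)) = 0$, whence $c_j = 0$; as $j$ is arbitrary, the characters are linearly independent.

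First I would set up the algebraic framework. Regard each representation space $V_i$ as a module over the convolution algebra $A = C_{c}^{\infty}(G)$ through $f \mapsto \pi_i(f)$. Two facts carry the representation theory of $G$ over to that of $A$: since $\pi_i$ is irreducible as a unitary $G$-representation, $V_i$ is an irreducible $A$-module (a closed $G$-invariant subspace and a closed $A$-invariant subspace coincide, as one recovers $\pi_i(g)v$ from the operators $\pi_i(f)v$ using an approximate identity supported near the identity of $G$); and since $\pi_i$ and $\pi_k$ are inequivalent for $i \neq k$, the modules $V_i$ and $V_k$ are non-isomorphic, because by Schur's lemma the intertwiners computed for $G$ and for $A$ agree. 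The construction of $\tilde f$ then rests on the following separation statement: for each $j$ there exists $f \in A$ with $\pi_i(f) = 0$ for all $i \neq j$ while $\pi_j(f) \neq 0$. Granting this, I would symmetrize by setting $\tilde f = f^{*} * f$, where $f^{*}(g) = \overline{f(g^{-1})}$; this again lies in $C_{c}^{\infty}(G)$, and since $G$ is unimodular one has $\pi_i(\tilde f) = \pi_i(f)^{*} \pi_i(f)$. Thus $\pi_i(\tilde f) = 0$ for $i \neq j$, while $\pi_j(\tilde f) = \pi_j(f)^{*}\pi_j(f)$ is a nonzero positive operator that is of trace class by Proposition \ref{trace}, so $\operatorname{trace}(\pi_j(\tilde f)) = \|\pi_j(f)\|^{2} > 0$, where the norm is the Hilbert--Schmidt norm. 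This is exactly the $\tilde f$ required.

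The step I expect to be the main obstacle is the separation statement --- genuinely isolating $\pi_j$ from the other representations at the level of the algebra $A$, with exact vanishing rather than mere approximation. The reason exactness is essential, and the reason I route the argument through the positive element $f^{*} * f$ instead of directly approximating a rank-one projection on $V_j$, is that the trace functional is not continuous for the operator norm or the strong operator topology; an approximate separation would give no control over $\operatorname{trace}(\pi_j(\tilde f))$. To establish exact separation I would use that $G$ is semisimple: by Proposition \ref{trace} every operator $\pi_i(f)$ is trace class, hence compact, so each $\pi_i(A)$ is an algebra of compact operators acting irreducibly on $V_i$. Mutually inequivalent irreducible representations of such a group are in fact mutually disjoint, and this disjointness --- obtained from the Jacobson density theorem applied to the finite semisimple module $\bigoplus_{i} V_i$ together with the ideal structure of the compact operators --- is what yields an $f$ annihilating every $V_i$ with $i \neq j$ but not $V_j$. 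Verifying that these structural inputs hold for the infinite-dimensional unitary representations at hand is the technical heart of the argument, and is precisely where the hypotheses of irreducibility and mutual inequivalence, and the semisimplicity of $G$, are used in an essential way.
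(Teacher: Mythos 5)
The paper itself offers no proof of this statement --- it is quoted from Knapp \cite[Theorem 10.6]{Kn} --- so the comparison is with the standard proof there. Your overall strategy is the right one and matches that proof in outline: reduce linear independence to producing, for each $j$, a test function $\tilde f\in C_c^\infty(G)$ with $\pi_i(\tilde f)=0$ for $i\neq j$ and $\operatorname{trace}\pi_j(\tilde f)\neq 0$. Your symmetrization $\tilde f=f^**f$, giving $\operatorname{trace}\pi_j(\tilde f)=\|\pi_j(f)\|_{\mathrm{HS}}^2>0$, is a clean way to guarantee a nonzero trace, and you correctly identify the separation statement as the crux and correctly insist that the vanishing for $i\neq j$ must be exact rather than approximate.

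However, the argument you offer for the separation statement does not work as described, and this is a genuine gap. The Jacobson density theorem requires the modules to be algebraically irreducible, and $V_i$ is only topologically irreducible over $A=C_c^\infty(G)$: for any vector $v$, the orbit $\pi_i(A)v$ consists of smooth vectors, hence is a proper (dense) subspace of $V_i$ whenever $\dim V_i=\infty$. So $\bigoplus_i V_i$ is not a semisimple $A$-module and the density theorem simply does not apply to it. The C*-algebraic route through compact operators (the norm closure of each $\pi_i(A)$ contains $K(V_i)$, inequivalent irreducibles of such algebras have distinct kernels, etc.) does produce a separating element, but only in the norm completion of $\pi(A)$; a dense subalgebra can meet a nonzero closed ideal trivially, so this does not yield an element of $C_c^\infty(G)$ itself --- and, as you note yourself, a norm-approximate separation gives no control on the trace. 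The missing input is Harish-Chandra's admissibility theorem: one passes to the subalgebra of left-and-right $K$-finite functions in $C_c^\infty(G)$ and to the modules $V_i^{K\text{-fin}}$ of $K$-finite vectors, which \emph{are} algebraically irreducible with commutant $\C$; the Jacobson density theorem then applies honestly, and composing with the idempotents projecting onto a finite set of $K$-types forces $\pi_i(f)$ to vanish off the finite-dimensional isotypic subspace $V_i^F$, giving the exact separation. Without this (or some substitute for it), the technical heart of your argument remains unproved.
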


Let $L^{1}_{loc}(G)$ be the space of all complex valued  measurable
functions $f$ on $G$ such that 
$$ \int\limits_{C} |f(g)|\ d\mu (g)\ < \infty\ \ \  \text{for all compact subset}\ C\ \text{of}\ G.$$

The following deep result of Harish Chandra will be crucially used in
the proof of Theorem ~\ref{thm1}.

\begin{theorem}\label{locint} ~\cite[Theorem 10.25; p.356]{Kn} Let \
$\pi$ be an irreducible unitary representation of $G$. The
distribution character  $ \chi_{\pi}$ is given by a locally integrable
function $h$ on $G$. i.e. there exists $ h \in L^{1}_{loc}(G)$ such
that 
 $$\chi_{\pi}(f) = \int \limits_{G}\ f(g)\ h(g)\ d\mu(g)
\quad \forall f \in C^{\infty}_c(G).$$
 \end{theorem}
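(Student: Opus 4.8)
The statement is Harish Chandra's regularity theorem, and the plan is to reduce it to the general theory of invariant eigendistributions and then identify where the real difficulty lies. First I would record the two structural features of $\chi_\pi$ that bring the machinery into play. By Proposition~\ref{trace} the character $\chi_\pi(f) = \text{trace}(\pi(f))$ is a well-defined distribution on $C_c^\infty(G)$. It is \emph{conjugation-invariant}: since trace is unaffected by conjugation, $\chi_\pi(f^g) = \chi_\pi(f)$ for $f^g(x) = f(gxg^{-1})$. It is moreover an \emph{eigendistribution} for the center $Z(\g)$ of the universal enveloping algebra $U(\g)$: by Schur's lemma $\pi$ has an infinitesimal character $\Lambda_\pi$, so $\pi(z) = \Lambda_\pi(z)\,\mathrm{Id}$ for $z \in Z(\g)$, and passing this scalar through the trace gives $\chi_\pi(Df) = \Lambda_\pi(z)\,\chi_\pi(f)$, where $D$ is the bi-invariant differential operator attached to $z$. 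Thus $\chi_\pi$ is an invariant eigendistribution, and it suffices to prove the general assertion that every invariant eigendistribution $\Theta$ on $G$ is represented by a function in $L^1_{loc}(G)$ which is moreover real-analytic on the regular set.

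The second step is to dispose of the regular part. Let $G'$ denote the open dense set of regular elements, those whose centralizer is a Cartan subgroup. The Casimir element $\Omega \in Z(\g)$ yields a second-order differential operator that is elliptic modulo lower-order terms, so the single equation $\Omega\cdot\Theta = \Lambda_\pi(\Omega)\,\Theta$ is elliptic on $G'$; by elliptic regularity $\Theta$ agrees there with a real-analytic function. The entire content of the theorem is therefore concentrated at the singular set $G \setminus G'$: one must show both that this analytic function on $G'$ is integrable across the singularities and that $\Theta$ carries no additional singular part supported on the complement of $G'$.

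To analyze the behavior near singular elements I would fix a Cartan subgroup $H$, a system of positive roots, and the Weyl denominator $\Delta_H = \prod_{\alpha>0}\left(e^{\alpha/2}-e^{-\alpha/2}\right)$ on $H$. Because an invariant distribution is determined by its restrictions to the Cartan subgroups, the natural object of study is $\Delta_H\cdot(\Theta|_H)$, for which the radial component of $\Omega$ simplifies (after this normalization) to an operator with essentially constant coefficients. The main obstacle — and the substance of Harish Chandra's work — is the a priori estimate that $\Delta_H\cdot\Theta$ extends to a \emph{bounded}, in fact continuous, function on $H$, satisfying prescribed jump relations across the singular hyperplanes $\alpha = 0$, a fact that requires descending to the centralizers of singular elements and controlling the solutions of these differential equations. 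Granting this boundedness, local integrability of $\Theta$ follows from the Weyl integration formula: the associated Jacobian contributes a factor $|\Delta_H|^2$, so on a singular neighbourhood $|\Theta|\,|\Delta_H|^2 = |\Delta_H\Theta|\,|\Delta_H| \le C\,|\Delta_H|$, and $|\Delta_H|$ is locally integrable; the matching jump conditions across all Cartan subgroups then account for the full distribution with no singular remainder. It is precisely this boundedness-and-jump estimate that forces the full depth of Harish Chandra's analysis, which is why we invoke the result rather than reprove it.
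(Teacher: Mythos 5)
The paper does not actually prove this statement: Theorem~\ref{locint} is Harish Chandra's regularity theorem, quoted from \cite[Theorem 10.25]{Kn} and used as a black box, so there is no internal proof to compare yours against. Judged on its own terms, your write-up is an accurate road map of Harish Chandra's argument (pass to invariant $Z(\mathfrak{g})$-eigendistributions, study $\Delta_H\cdot\Theta$ on each Cartan subgroup, deduce local integrability from the Weyl integration formula), but it is not a proof: you explicitly ``grant'' the boundedness of $\Delta_H\Theta$ together with the matching and jump conditions across the singular hyperplanes, and, as you yourself acknowledge, that estimate \emph{is} the content of the theorem. A proposal that ends by invoking the result it set out to establish has a genuine gap, however correct the surrounding reductions are; for this paper the right move is simply to cite Knapp, as the authors do.

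One intermediate step as written would also fail. You claim that the equation $\Omega\cdot\Theta=\Lambda_\pi(\Omega)\,\Theta$ is elliptic on the regular set $G'$, so that elliptic regularity yields analyticity there. For a noncompact semisimple group the Killing form is indefinite, so the Casimir is an ultrahyperbolic operator, not an elliptic one, and its radial component on a noncompact Cartan subgroup is likewise non-elliptic; elliptic regularity does not apply to that single equation. The correct argument uses the whole center: under the Harish Chandra isomorphism $Z(\mathfrak{g})$ maps onto the Weyl-invariant polynomials on a Cartan subalgebra, which contain a positive definite (Weyl-averaged) quadratic form whose associated constant-coefficient operator \emph{is} elliptic; alternatively one shows that $\Delta_H\Theta$ satisfies a system of finite codimension whose distributional solutions are exponential polynomials on each connected component of the regular set of $H$. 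Either way, analyticity on $G'$ requires more than the Casimir equation alone, and your final integrability estimate $|\Theta|\,|\Delta_H|^2\le C\,|\Delta_H|$, while correct once boundedness of $\Delta_H\Theta$ is known, rests entirely on the part of the proof you have deferred.
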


\subsection{Selberg trace formula for compact quotient}\label{STF} We
recall the Selberg trace formula for compact quotient. (For details,
see Wallach ~\cite[p.171-172]{Wa}).  Let $\mu'$ be the normalized
$G$-invariant measure on the quotient space $\Gamma \backslash G$.
For a compactly supported smooth function $f$ on $G$, the convolution
operator $R_{\Gamma}(f)$ on $L^2(\Gamma \backslash G)$ is given by :

\[
\begin{split} R_{\Gamma}(f)(\phi)(y)\ \ \ = \int \limits_{G}f(x)\
\phi(yx)\ d\mu(x) \ \ \ \ \ \ \forall\ \phi \in L^2(\Gamma \backslash
G)\  \text{and}\ y \in G.
\end{split}
\]

\[
\begin{split} = \int \limits_{\Gamma \backslash G}\left[
\sum\limits_{\gamma\ \in\ \Gamma}f(y^{-1} \gamma x) \right]\ \phi(x)\
d\mu'(x).
\end{split}
\]

Since $f$ is a smooth and compactly supported function on $G$ and
$\Gamma$ is  uniform lattice, the sum $K_{f}(y,x) =
\sum\limits_{\gamma\ \in\ \Gamma}f(y^{-1} \gamma x)$ is a finite sum,
and hence it follows that the operator $R_{\Gamma}(f)$ is of
Hilbert-Schmidt class. The trace of $R_{\Gamma}(f)$ is defined and it
is given by integrating the kernel function $K_{f}(y,x)$,

\[
\begin{split} \text{tr}(R_{\Gamma}(f)) = \int \limits_{\Gamma
\backslash G} \left[\sum_{\gamma\ \in\ \Gamma} f(x^{-1}\gamma
x)\right] d\mu'(x)
\end{split}
\]

Let $[\gamma]_{G}$ (resp. $[\gamma]_{\Gamma}$) be the conjugacy class
of $\gamma$ in $G$ (resp. in $\Gamma$). Let $\left[\Gamma\right]$
(resp.  $\left[\Gamma \right]_G$) be the set of conjugacy classes in
$\Gamma$ (resp. the $G$-conjugacy classes of elements in $\Gamma$).
For $\gamma \in \Gamma$, let $G_{\gamma}$ be the centralizer of
$\gamma$ in $G$. Put $\Gamma_{\gamma} = \Gamma \cap G_{\gamma}$. It
can be seen that $\Gamma_{\gamma}$  is a lattice in $G_{\gamma}$ and
the quotient $\Gamma_{\gamma} \backslash G_{\gamma}$ is compact.
Since $G_{\gamma}$ is unimodular, there exists a $G$-invariant measure
on $G_{\gamma} \backslash G$, denoted by $d_{\gamma}x$. After
normalizing the measures on $G_{\gamma}$ and $G_{\gamma} \backslash G$
appropriately and rearranging the terms on the right  hand side of
above equation, we get :

\begin{equation}\label{1} \text{tr}(R_{\Gamma}(f)) \ \ \ \ \ \ \ =
\sum \limits_{[\gamma]\ \in \ \left[\Gamma \right]} \text{vol}
(\Gamma_{\gamma} \backslash G_{\gamma}) \ \int\limits_{G_{\gamma}
\backslash G} f(x^{-1} \gamma x)\ d_{\gamma}x
\end{equation}

\begin{equation*}\label{geom} = \sum \limits_{[\gamma]\ \in\
[\Gamma]_G}a(\gamma,\Gamma)\ O_{\gamma}(f)
\end{equation*} where $O_{\gamma}(f)$ is the orbital integral of $f$
at $\gamma$ defined  by,  
\[ O_{\gamma}(f) = \int\limits_{G_{\gamma} \backslash G} f(x^{-1}
\gamma x)\ d_{\gamma}x.\] Here \[ a(\gamma, \Gamma) = \sum
\limits_{[\gamma']_\Gamma \ \subseteq\ [\gamma]_G}\ \text{vol} \
(\Gamma_{\gamma'} \backslash G_{\gamma'}). \]  If $\gamma$ is not
conjugate to an element in  $\Gamma$,  we define $a(\gamma,
\Gamma)=0$.  On the other hand, the trace of $R_{\Gamma}(f)$ on the
spectral side can be written as an absolutely convergent series as,
\begin{equation}\label{rep} \text{tr}(R_{\Gamma}(f)) = \sum
\limits_{\pi\ \in \ \widehat{G}} m(\pi,\Gamma)\chi_{\pi}(f)
\end{equation}
 
Hence from (\ref{1}) and (\ref{rep}), we obtain the Selberg trace
formula: 

\begin{equation}\label{STF1} \sum \limits_{\pi\ \in \ \widehat{G}}
m(\pi,\Gamma) \chi_{\pi}(f) = \sum \limits_{[\gamma]\ \in\
[\Gamma]_G}a(\gamma,\Gamma)\ O_{\gamma}(f). 
\end{equation}

\section{\bf{Proof of Theorem ~\ref{thm1}}}
\subsection{Some preliminary lemmas} We first recall some known
results about the geometry of conjugacy classes in $G$.

\begin{lemma}\label{conj} Let $\Gamma$ be a uniform lattice in
$G$. Let $\gamma \in \Gamma$.  Then the $G$-conjugacy class
$[\gamma]_G$ is a closed subset of measure zero in $G$. 
\end{lemma}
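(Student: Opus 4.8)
The plan is to reduce everything to the statement that every element of a uniform lattice is semisimple, and then to exploit the good behaviour of semisimple conjugacy classes.

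First I would prove that $\gamma$ is semisimple, the key input being compactness of $\Gamma\backslash G$ together with discreteness of $\Gamma$. Write the Jordan decomposition $\gamma=\gamma_s\gamma_u$ in $G$, and suppose the unipotent part $\gamma_u$ is nontrivial. Using the Jacobson--Morozov theorem inside the (reductive) centralizer of $\gamma_s$, I can embed $\log\gamma_u$ in an $\mathfrak{sl}_2$-triple and choose a one-parameter subgroup $a_t=\exp(tH)$ with $H$ commuting with $\gamma_s$ such that $a_t\gamma a_t^{-1}\to\gamma_s$ as $t\to\infty$. By compactness of $\Gamma\backslash G$, a subsequence of the points $\Gamma a_{t_n}^{-1}$ converges, so there are $\delta_n\in\Gamma$ and $b_n\to g_0$ in $G$ with $\delta_n a_{t_n}^{-1}=b_n$. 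Then $\delta_n\gamma\delta_n^{-1}=b_n\,(a_{t_n}\gamma a_{t_n}^{-1})\,b_n^{-1}\to g_0\gamma_s g_0^{-1}$, and discreteness of $\Gamma$ forces this sequence of elements of $\Gamma$ to be eventually constant, equal to some element $\eta$ conjugate to $\gamma_s$, hence semisimple. Since $\eta$ is conjugate to $\gamma$ in $\Gamma$ and semisimplicity is conjugation-invariant, $\gamma$ itself would be semisimple, i.e. $\gamma_u=1$, a contradiction. Hence $\gamma$ is semisimple.

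Next, granting that $\gamma$ is semisimple, the conjugacy class $[\gamma]_G$ is closed: semisimple conjugacy classes in a real semisimple Lie group are closed, which I would either quote as a standard fact or deduce from closedness of the orbit $G/G_\gamma$ via the reductivity of $G_\gamma$. For the measure-zero assertion, note that $[\gamma]_G$ is the image of the smooth orbit map $G/G_\gamma\to G$, hence a submanifold of dimension $\dim G-\dim G_\gamma$. Since $\gamma$ is semisimple it lies in a Cartan subgroup (maximal torus) $T$, and $T$, being abelian, is contained in $G_\gamma$; as $G$ is semisimple, $\dim T=\operatorname{rank} G\ge 1$, so $\dim G_\gamma\ge 1$. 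Therefore $[\gamma]_G$ has strictly positive codimension in $G$, and a closed submanifold of positive codimension has Haar measure zero. This yields both conclusions simultaneously.

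The main obstacle is the first step, the semisimplicity of elements of a uniform lattice; this is where cocompactness enters in an essential way, through the contraction $a_t\gamma a_t^{-1}\to\gamma_s$ combined with the Mahler-type compactness and discreteness argument above. Once semisimplicity is in hand, closedness and the dimension count are routine, the only delicate point being the passage from closedness of the algebraic orbit to closedness of the real conjugacy class, which I would handle by working directly with the real orbit $G/G_\gamma$ and the properness coming from the reductivity of $G_\gamma$.
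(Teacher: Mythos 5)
Your proposal is correct in substance but takes a genuinely different, and heavier, route to closedness than the paper does. The paper proves that $[\gamma]_G$ is closed by a short, purely topological argument: given $g_n^{-1}\gamma g_n \to h$, write $g_n = \gamma_n d_n$ with $d_n$ in a fixed relatively compact set $D$ (cocompactness), pass to a subsequence with $d_n \to d$, observe that $\gamma_n^{-1}\gamma\gamma_n \to d^{-1}hd$, and invoke discreteness of $\Gamma$ to conclude the sequence is eventually constant, so $h \in [\gamma]_G$. This works for any locally compact group and cocompact discrete subgroup, with no structure theory. You instead first establish that $\gamma$ is semisimple (via Jordan decomposition, Jacobson--Morozov, and the contraction $a_t\gamma a_t^{-1}\to\gamma_s$ combined with the same cocompactness-plus-discreteness trick) and then quote the closedness of semisimple conjugacy classes. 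Your argument is valid, and it is amusing that the engine driving your semisimplicity proof is exactly the sequential argument the paper applies directly to closedness; but your route needs a well-behaved Jordan decomposition (e.g.\ $G$ linear or with finite center), which the paper's bare hypothesis ``semisimple Lie group'' does not explicitly grant, and it imports substantially more machinery. On the other hand, your approach buys something the paper elides: the paper's measure-zero step asserts that $G_\gamma$ contains a Cartan subgroup, which is really a property of semisimple elements, so your explicit proof of semisimplicity makes that step airtight, whereas the paper leaves it implicit. The measure-zero argument itself (positive-dimensional centralizer, hence the orbit is an embedded submanifold of positive codimension, hence null) is identical in both.
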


\begin{proof} 

Let $\left\{g_{n}^{-1} \gamma g_{n}\right\}_{n=1}^{\infty}$ be a
sequence of points in  $[\gamma]_G$ which converges to $h$ in
$G$. Since $\Gamma \backslash G$ is compact,  there exists a
relatively compact set $D$ of $G$ such that $G = \Gamma D$. Write  $
g_{n} = \gamma_{n}\ d_{n}$ where $\gamma_{n}\ \in\ \Gamma$ and $d_{n}\
\in\ D$. Hence,
\[ g_{n}^{-1}\ \gamma\ g_{n} = d_{n}^{-1}\ \gamma_{n}^{-1}\ \gamma\
\gamma_{n}\ d_{n}.\] Since $D$ is relatively compact, there is a
convergent subsequence of  $\left\{ d_{n} \right\}_{n=1}^{\infty}$,
which converges to some element $d$ of $G$.  Hence we get,  
\[\lim \limits_{n \rightarrow \infty} \gamma_{n}^{-1}\ \gamma\
\gamma_{n} = d^{-1}hd.\] Since $\Gamma$ is discrete, for large $n$,
\[\gamma_{n}^{-1}\ \gamma\ \gamma_{n} = d^{-1}hd.\] Hence $h\ \in\
[\gamma]_G$. Thus $[\gamma]_G$ is closed in $G$.

The conjugacy class $[\gamma]_G$ is homeomorphic to the homogeneous
space $G_{\gamma}\backslash G$.  Hence there exists a natural
structure of a smooth manifold on it such that it is a submanifold of
$G$.  Since $G_{\gamma}$ is non trivial (it contains a Cartan
subgroup of $G$), it is of lower dimension than $G$ and hence of
measure zero with respect to the Haar measure $\mu$ on $G$.

\end{proof}

\begin{lemma}\label{berard}  Let $\Omega$ be a relatively compact
subset of $G$. Then the set 
\[ A_{\Omega}  =  \left\{\ [\gamma]_G\ :\  \gamma \in \Gamma\
\text{and}\  [\gamma]_G\ \cap\ \Omega\ \neq\ \emptyset\ \right\}\] is
finite.
\end{lemma}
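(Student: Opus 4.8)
The plan is to exploit the cocompactness of $\Gamma \backslash G$ to replace an arbitrary $G$-conjugate of $\gamma$ meeting $\Omega$ by a $\Gamma$-conjugate that lands in a fixed compact set, and then to invoke the discreteness of $\Gamma$. Concretely, since $\Gamma \backslash G$ is compact, I would first fix a relatively compact set $D \subseteq G$ with $G = \Gamma D$, exactly as in the proof of Lemma \ref{conj}, and let $\overline{D}$ and $\overline{\Omega}$ denote the compact closures of $D$ and $\Omega$.

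Next, suppose $[\gamma]_G \cap \Omega \neq \emptyset$ for some $\gamma \in \Gamma$, say $g^{-1}\gamma g \in \Omega$ for some $g \in G$. Writing $g = \gamma'\, d$ with $\gamma' \in \Gamma$ and $d \in D$, one obtains
\[ d^{-1}\,(\gamma'^{-1}\gamma\gamma')\, d = g^{-1}\gamma g \in \Omega, \]
so that $\gamma'^{-1}\gamma\gamma' \in d\,\Omega\,d^{-1} \subseteq \overline{D}\,\overline{\Omega}\,\overline{D}^{-1}$. The element $\gamma'^{-1}\gamma\gamma'$ again lies in $\Gamma$ and is $\Gamma$-conjugate (hence $G$-conjugate) to $\gamma$, so $[\gamma]_G = [\gamma'^{-1}\gamma\gamma']_G$. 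Therefore every class in $A_\Omega$ admits a representative in the set $S = \Gamma \cap \bigl(\overline{D}\,\overline{\Omega}\,\overline{D}^{-1}\bigr)$. Since $\overline{D}\,\overline{\Omega}\,\overline{D}^{-1}$ is compact, being the continuous image of a product of compact sets, and $\Gamma$ is discrete, the set $S$ is finite. The assignment sending an element of $S$ to its $G$-conjugacy class is a surjection of $S$ onto $A_\Omega$, whence $A_\Omega$ is finite.

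The argument is essentially a packaging of compactness together with discreteness, so I do not expect any serious obstacle. The one point requiring care is to conjugate by the \emph{lattice} element $\gamma'$ rather than by $g$ itself: this is precisely what guarantees that the resulting conjugate of $\gamma$ simultaneously remains inside $\Gamma$ and lands in the fixed compact set $\overline{D}\,\overline{\Omega}\,\overline{D}^{-1}$, which is what makes the discreteness of $\Gamma$ applicable.
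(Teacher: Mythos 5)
Your proof is correct and is essentially identical to the paper's: both decompose $g=\gamma' d$ with $d$ in a relatively compact fundamental set $D$, observe that $\gamma'^{-1}\gamma\gamma'$ lies in $\Gamma\cap D\,\Omega\,D^{-1}$, and conclude by discreteness of $\Gamma$. The remark about conjugating by the lattice element rather than by $g$ is exactly the point the paper's proof relies on as well.
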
  

\begin{proof} Let $x \in G$ be such that $x^{-1} \gamma x \in \Omega$.
As in Lemma ~\ref{conj}, write $x = \gamma_1. \delta$ where $\gamma_1
\in \Gamma$ and $\delta \in D$.  Hence $\gamma_1^{-1} \gamma \gamma_1
\in D \Omega D^{-1}$ which is relatively compact in $G$. Hence
$\gamma_1^{-1} \gamma \gamma_1  \in D \Omega D^{-1}\ \cap\ \Gamma$
which is a finite set.
\end{proof}

\begin{corollary}\label{E} Let $E$ be the union of the  conjugacy
classes $\ [\gamma]_G\ \text{such that}\ \gamma \in  {\Gamma_1} \cup
{\Gamma_2}$.   Then $E$ is a closed subset of measure zero in $G$.
\end{corollary}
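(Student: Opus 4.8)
The plan is to prove the two assertions separately: that $E$ has measure zero, and that $E$ is closed. The measure-zero statement is immediate. Since $G$ is second countable, the lattices $\Gamma_1$ and $\Gamma_2$ are countable, so $\Gamma_1 \cup \Gamma_2$ is countable and $E$ is a countable union of the conjugacy classes $[\gamma]_G$ with $\gamma \in \Gamma_1 \cup \Gamma_2$. Each such class has measure zero by Lemma~\ref{conj}, and a countable union of sets of measure zero again has measure zero with respect to $\mu$; hence $E$ is of measure zero in $G$.

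For closedness, the point to exploit is the local finiteness furnished by Lemma~\ref{berard}. I would fix a point $h \in G$ and choose a relatively compact open neighbourhood $\Omega$ of $h$. Applying Lemma~\ref{berard} once to $\Gamma_1$ and once to $\Gamma_2$, and taking the union of the two resulting finite sets, we see that only finitely many conjugacy classes, say $C_1, \dots, C_n$, among the $[\gamma]_G$ with $\gamma \in \Gamma_1 \cup \Gamma_2$ meet $\Omega$. Consequently $E \cap \Omega = (C_1 \cup \dots \cup C_n) \cap \Omega$. Each $C_i$ is closed in $G$ by Lemma~\ref{conj}, so the finite union $C_1 \cup \dots \cup C_n$ is closed in $G$.

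To finish I would verify that the complement of $E$ is open. If $h \notin E$, then $h$ lies in none of the $C_i$; since $C_1 \cup \dots \cup C_n$ is closed and $h$ is not in it, there is an open neighbourhood $U \subseteq \Omega$ of $h$ disjoint from $C_1 \cup \dots \cup C_n$. Because $E$ coincides with $C_1 \cup \dots \cup C_n$ inside $\Omega$, we obtain $U \cap E = \emptyset$. Thus every point outside $E$ admits a neighbourhood missing $E$, which shows that $E$ is closed.

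The main obstacle is precisely that an arbitrary union of closed sets need not be closed, so the countability argument that settles the measure-zero part cannot be imitated for closedness. The essential input is therefore Lemma~\ref{berard}: it guarantees that the family of $G$-conjugacy classes of elements of $\Gamma_1 \cup \Gamma_2$ is locally finite, which reduces the closedness question, in a neighbourhood of each point, to the elementary fact that a \emph{finite} union of closed sets is closed.
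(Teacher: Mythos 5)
Your proof is correct and takes essentially the same route as the paper: Lemma~\ref{berard} gives local finiteness of the family of conjugacy classes (hence closedness of the union, since a finite union of the closed sets from Lemma~\ref{conj} is closed near each point), and countability of $\Gamma_1\cup\Gamma_2$ gives measure zero. Your write-up is in fact slightly more careful than the paper's one-line version, which loosely asserts that $E\cap C$ is ``finite'' when what is meant is that only finitely many classes meet $C$.
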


\begin{proof} By using above two lemmas, it follows that $E \cap C$ is
finite for every compact subset $C \subseteq G$. Hence $E$ is closed
in $G$. It is of measure  zero since it is a countable union of sets
of measure zero.
\end{proof}

\subsection{Proof of Theorem ~\ref{thm1}} For $\pi \in \widehat{G}$,
let $t_\pi = m(\pi,\Gamma_1) - m(\pi,\Gamma_2)$.  Let $f \in
C^{\infty}_{c}(G)$. Since the series in equation (\ref{STF1})
converges absolutely, by comparing equation (\ref{STF1}) for
$\Gamma_1$ and $\Gamma_2$, we obtain:

\begin{equation*}\label{trf1} \sum \limits_{\pi\ \in \ \widehat{G}}
t_\pi\ \chi_{\pi}(f) = \sum \limits_{\substack{[\gamma]\  \in \
[\Gamma_1]_G\ \cup\ [\Gamma_2]_G}} (a(\gamma,\Gamma_1) -
a(\gamma,\Gamma_2)) \ O_{\gamma}(f). 
\end{equation*} By hypothesis, $t_\pi = 0$ \ for all but finitely many
$\pi \in \hat{G}$.   Hence there exists a finite subset $S$ of
$\widehat{G}$ such that, 

\begin{equation}\label{trf3}
\begin{split} \sum \limits_{\pi\ \in\ S} \ t_{\pi}\chi_{\pi}(f) =
\sum \limits_{\substack{[\gamma]\ \in \ [\Gamma_1]_G\ \cup\
[\Gamma_2]_G}}  (a(\gamma,\Gamma_1) - a(\gamma,\Gamma_2))\
O_{\gamma}(f). 
\end{split}
\end{equation} Since $S$ is a finite set, by Harish Chandra's Theorem
~\ref{locint}, there exists a function $\phi \in L^{1}_{loc}(G)$ such
that 
 
\begin{equation} \label{loc} \sum \limits_{\pi\ \in\ S}
t_{\pi}. \chi_{\pi}(f) = \int \limits_{G} f(g)\ \phi(g)\ d\mu(g) \quad
\forall\ f \in C^{\infty}_c(G). 
\end{equation} Let $E$ be as in Corollary \ref{E} above. Let $g \in G$
be any point outside $E$.  Since $E$ is closed in $G$, there exists a
relatively compact neighborhood $U$ of $g$  such that $U \cap E =
\emptyset $. Hence, if  $ f\ \in\ C^{\infty}_c(G) $  is supported on
$U$, we have 
\[O_{\gamma}(f) = 0 \quad \quad   \forall\ \gamma \in \Gamma_1 \cup
\Gamma_2.\]  Hence from equations  (\ref{trf3}) and (\ref{loc}) above,
we get : 
\[\int \limits_{G} f(g)\ \phi(g)\ d\mu(g) = 0,\] for all smooth
compactly supported functions $f$ supported in $U$.
 
But this means that $ \phi(g)$ is essentially $0$ on $U$.  Since $U$
was a  neighborhood  of an arbitrary point $g$ outside $E$, and $E$ is
a closed subset of measure zero, we conclude that $ \phi(g)$ is
essentially $0$ on $G$. By equation (\ref{loc}) above : 
\[\sum \limits_{\pi\ \in\ S} t_{\pi} \chi_{\pi}(f)\ =\ 0 \quad
\forall\ f\ \in C^{\infty}_c(G).\] From the linear independence of
characters (Theorem ~\ref{LinInd}),  we get that $ t_{\pi} = 0 $ for
any $\pi \in S$. Hence,
\[ m(\pi, \Gamma_{1}) = m(\pi, \Gamma_{2}) \quad \forall\ \pi \in
\widehat{G}.\] i.e., the lattices $\Gamma_1$ and $\Gamma_2$ are
representation  equivalent in $G$.

\section{Proof of Theorem ~\ref{prin}}\label{prinlapla} In this
section we give a proof of Theorem \ref{prin}, following the broad
outline of the proof of Theorem \ref{thm1}. Since the analogue of
Corollary \ref{E} does not seem available to us, we need to establish
a more delicate proposition concerning the $K\times K$-saturation
$KC_{\gamma}K$ of the conjugacy class of elements $\gamma\in
\Gamma$. Corresponding to use of Harish Chandra's theorem on the local
integrability of the character of an irreducible unitary
representation of $G$, we instead use the analyticity of the spherical
functions on $G$. 

\begin{definition}\cite[p.399]{GV} A complex valued function $\phi$ on
$G$ is called a spherical function if
\begin{enumerate}
  \item $\phi(e) = 1$.
  \item $\phi(k_{1} x k_{2}) = \phi(x)\quad  \forall\ k_{1},\ k_{2}
\in K\ \text{and}\ x \in G$.
  \item $\phi$ is a common eigenfunction for all $D$ in the space
$D(G/K)$ of $G$-invariant differential operators on $G / K$ with
eigenvalue $\lambda(D)$:
 \[ D\phi = \lambda(D)\phi \ \ \forall\ D \in D(G/K).\]
  \end{enumerate}
 \end{definition}  The map $D \rightarrow \lambda(D)$ defines a
algebra homomorphism   of $D(G/K)$ into $\C$. Denote by
$C^{\infty}_{c}(G//K)$  the space of smooth and compactly supported
bi-$K$-invariant functions on $G$.
   
Let $\pi$ be a spherical unitary representation of $G$. The space
$\pi^{K}$ of $K$-fixed vectors is one dimensional
(\textit{cf}. Helgason ~\cite[p.416]{He}).  Let $\phi_{\pi}$ be the
associated elementary spherical function defined by    
\[ \phi_{\pi}(x) = \left\langle\ \pi(x)\ e_{\pi}, e_{\pi}\
\right\rangle, \] where $e_{\pi}$ is a $K$-fixed vector of the
representation space of $\pi$ such that  $\|e_{\pi}\| = 1$. 

We have the following proposition: 
 \begin{proposition}\label{warn} Let $\pi$ be an irreducible unitary
spherical representation of $G$. Then the following hold:
\begin{enumerate}[(i)]
\item The associated elementary spherical functions $\phi_{\pi}$ are
analytic on $G$. 
\item The relationship of the elementary spherical function
$\phi_{\pi}$ to character $\chi_{\pi}$ is given by the following
equation:
\begin{equation*} \chi_{\pi}(f) = \int \limits_{G} f(g)\
\phi_{\pi}(g)\ dg \quad f\in C^{\infty}_{c}(G//K).
\end{equation*}
\item   Let $\left\{\pi_{j} : 1 \leq j \leq k \right\} $ be a finite
collection of  mutually inequivalent irreducible spherical
representations of $G$.  The associated elementary spherical functions
$\left\{\phi_{\pi_{j}} \ : 1 \leq j \leq k \right\} $ are linearly
independent. 
\end{enumerate}
\end{proposition}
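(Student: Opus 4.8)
The plan is to establish the three parts in sequence, building the analytic and trace-theoretic tools first and reserving the genuinely representation-theoretic input for the linear independence in (iii). For (i), I would use that $\phi_\pi$ is by definition a joint eigenfunction of the whole algebra $D(G/K)$, and in particular an eigenfunction of the Laplace--Beltrami operator $\Delta \in D(G/K)$. Since $\Delta$ is a $G$-invariant elliptic operator on the Riemannian symmetric space $G/K$, it has real-analytic coefficients, and the relation $\Delta \phi_\pi = \lambda(\Delta)\phi_\pi$ together with the analytic version of elliptic regularity forces $\phi_\pi$ to be real-analytic on $G/K$. As $\phi_\pi$ is right $K$-invariant it descends to $G/K$, and analyticity there pulls back along the analytic submersion $G \to G/K$; this gives analyticity on all of $G$ (see \cite{He} or \cite{GV}).

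For (ii), the key observation is that for a bi-$K$-invariant $f$ the operator $\pi(f)$ has rank at most one. Using that $f$ is right $K$-invariant, the change of variables $g \mapsto gk^{-1}$ yields $\pi(f)\pi(k) = \pi(f)$, so $\pi(f)$ annihilates every vector $\pi(k)v - v$ and hence the entire orthocomplement $(\pi^K)^\perp$, which is the closed span of such vectors (being the range of $I - \int_K \pi(k)\,dk$). Using that $f$ is left $K$-invariant, the change of variables $g \mapsto k^{-1}g$ yields $\pi(k)\pi(f) = \pi(f)$, so the range of $\pi(f)$ lies in $\pi^K$. Since $\pi^K = \C e_\pi$ is one-dimensional we have $(\pi^K)^\perp = e_\pi^\perp$, and $\pi(f)$ kills $e_\pi^\perp$ while sending $e_\pi \mapsto \langle \pi(f)e_\pi, e_\pi\rangle\, e_\pi$. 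Computing the trace in an orthonormal basis adapted to $\C e_\pi \oplus e_\pi^\perp$ then gives
\[ \chi_\pi(f) = \mathrm{trace}\,\pi(f) = \langle \pi(f)e_\pi, e_\pi\rangle = \int_G f(g)\,\phi_\pi(g)\,dg. \]

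For (iii), I would argue through distinct eigenvalue-homomorphisms rather than through characters. The elementary spherical function is the unique normalized bi-$K$-invariant joint eigenfunction of $D(G/K)$ attached to its eigenvalue homomorphism $\lambda: D(G/K) \to \C$; consequently, if two of the $\pi_j$ had the same $\lambda$ they would share the same spherical function and hence be equivalent. Thus inequivalence of the $\pi_j$ forces the homomorphisms $\lambda_1,\dots,\lambda_k$ to be pairwise distinct. Because $D(G/K)$ is a commutative algebra with unit and the $\lambda_j$ are distinct unital homomorphisms, the simultaneous-value map $D \mapsto (\lambda_1(D),\dots,\lambda_k(D))$ has image a unital, point-separating subalgebra of $\C^k$, hence all of $\C^k$; so there exist $E_i \in D(G/K)$ with $\lambda_j(E_i) = \delta_{ij}$. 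Applying $E_i$ to a hypothetical relation $\sum_j c_j \phi_{\pi_j} = 0$ collapses it to $c_i \phi_{\pi_i} = 0$, and evaluating at $e$, where $\phi_{\pi_i}(e) = 1$, yields $c_i = 0$.

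I expect (iii) to be the main obstacle, precisely because the tempting shortcut fails: one cannot combine part (ii) with the linear independence of characters (Theorem \ref{LinInd}), since that theorem asserts independence as distributions on all of $C^\infty_c(G)$, whereas part (ii) only controls the $\chi_{\pi_j}$ on the bi-$K$-invariant subspace, and restricting the characters to that subspace is by (ii) exactly integration against the $\phi_{\pi_j}$ — so that route is circular. The honest input is therefore the (standard but nontrivial) fact that the spherical function is a complete invariant of the representation, equivalently that inequivalent spherical representations carry distinct $\lambda_j$; once this is granted, the separation inside the commutative algebra $D(G/K)$ is elementary. A secondary point deserving care is that part (i) requires the real-analytic, not merely $C^\infty$, form of elliptic regularity.
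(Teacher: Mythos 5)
Your proposal is correct and follows essentially the same route as the paper: analytic elliptic regularity via the Laplace--Beltrami operator for (i), the observation that $\pi(f)$ maps into the one-dimensional space $\pi^K$ for (ii), and linear independence of joint eigenfunctions for distinct characters of the commutative algebra $D(G/K)$ for (iii). You merely supply more detail than the paper does, in particular the justification (via uniqueness of the normalized spherical function for a given $\lambda$) that inequivalent spherical representations yield distinct eigenvalue homomorphisms, and the correct warning that Theorem~\ref{LinInd} cannot be invoked on the bi-$K$-invariant subspace.
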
 
\begin{proof}
\begin{enumerate}[(i)]
\item Since the algebra $D(G/K)$ contains the Laplace-Beltrami
operator which is an elliptic,  essentially self adjoint differential
operator, it follows that the elementary spherical functions
$\phi_{\pi}$ are analytic on $G$.   
\item Let $V$ be the space underlying the representation $\pi$. Given
$f \in$ $ C^{\infty}_{c}(G//K)$, the image $\pi(f)(V)$ of the
convolution operator $\pi(f)$  lands in the space $V^K$ of
$K$-invariants. Hence the trace is given by,
\[ \chi_{\pi}(f) = \text{trace}~(\pi(f))   =\left\langle \pi(f)
e_{\pi}, e_{\pi} \right\rangle  = \int \limits_{G} f(g)\
\phi_{\pi}(g)\ dg.
\]
\item The function $\phi_{\pi_{j}}$ is an eigenvector for the
character $\lambda_{\pi_{j}}$.  Since the representations $\pi_{j}$
are mutually inequivalent, the homomorphisms $\lambda_{\pi_{j}}$ are
distinct and hence the corresponding eigenvectors are linearly
independent.
\end{enumerate}

\end{proof}
   
Now we turn to the geometric aspects of the Selberg trace formula. Let
$G//K$ denote the collection of orbits under the action of $K\times K$
acting on $G$ by, 
\begin{equation}\label{biKaction} (k, l)g=k^{-1}gl \quad \quad k, l\in
K, ~g\in G.
\end{equation}

\begin{lemma}\label{sph}   The space $C^{\infty}_{c}(G//K)$ consisting
of bi-$K$-invariant compactly  supported smooth functions on $G$
separate points on $G//K$.
   \end{lemma}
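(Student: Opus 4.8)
The plan is to prove the separation property directly, by constructing for any two distinct points of $G//K$ a bi-$K$-invariant bump function taking different values on them. A point of $G//K$ is a double coset $KgK$, and since $K$ is compact each such orbit is a compact subset of $G$, while distinct orbits are disjoint. So, given $x,y\in G$ with $KxK\neq KyK$, I would first record that the orbits $O_1=KxK$ and $O_2=KyK$ are disjoint compact subsets of the Lie group $G$.

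First I would separate these two orbits by an ordinary smooth function, before worrying about invariance. Using that $G$ is a smooth manifold and hence admits smooth bump functions, I would choose disjoint relatively compact open sets $U_1\supseteq O_1$ and $U_2\supseteq O_2$, and then pick $h\in C_c^\infty(G)$ with $0\le h\le 1$, with $h\equiv 1$ on a neighbourhood of $O_1$ and $\mathrm{supp}(h)\subset U_1$. In particular $h\equiv 1$ on all of $O_1$ and $h\equiv 0$ on all of $O_2$.

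Next I would symmetrize $h$ over $K\times K$ to enforce bi-$K$-invariance. Normalising the Haar measure on $K$ to total mass one, I would set
$$ f(g)=\int_K\int_K h(k_1^{-1}\,g\,k_2)\,dk_1\,dk_2. $$
For $a,b\in K$ the substitutions $k_1\mapsto a k_1$ and $k_2\mapsto b^{-1}k_2$, together with invariance of the Haar measure, give $f(agb)=f(g)$, so $f$ is bi-$K$-invariant. Since $h$ is smooth and the integration runs over the compact group $K\times K$, the function $f$ is smooth, and $\mathrm{supp}(f)\subseteq K\,\mathrm{supp}(h)\,K$ is compact; hence $f\in C_c^\infty(G//K)$.

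Finally I would check that the averaging preserves the separation of values. For all $k_1,k_2\in K$ one has $k_1^{-1}x k_2\in O_1$, where $h\equiv 1$, so $f(x)=1$; likewise $k_1^{-1}y k_2\in O_2$, where $h\equiv 0$, so $f(y)=0$, whence $f(x)\neq f(y)$. The only point that genuinely requires care is precisely this last one: that the $K\times K$-average does not wash out the difference between the two values. This is exactly where compactness of $K$ enters, since it forces the orbits $O_1$ and $O_2$ to be compact, allowing $h$ to be chosen identically $1$ on the whole of $O_1$ and identically $0$ on the whole of $O_2$, so that the symmetrized function returns the constant value $1$ at $x$ and $0$ at $y$.
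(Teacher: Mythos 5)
Your proof is correct and follows essentially the same route as the paper: separate the two compact, disjoint $K\times K$-orbits by an ordinary bump function and then average over $K\times K$ to obtain a bi-$K$-invariant smooth compactly supported function that still distinguishes them. Your version is slightly more explicit (arranging $h\equiv 1$ on one orbit so the averaged values are exactly $1$ and $0$), but the idea is identical.
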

\begin{proof} The orbits of $K\times K$ being compact are closed
subsets of $G$. Given two orbits $KxK, ~KyK$ choose  a compactly
supported, smooth function which is positive on $KxK$ and vanishes on
$KyK$. Then, 
\[ F(g) =\int_{K\times K} f(kgl)dk dl,\] is a bi-$K$-invariant
compactly supported smooth function on $G$ which separates the two
orbits.  
   \end{proof}

\begin{lemma}\label{gamma} Let $\Gamma$ be a torsion-free uniform
lattice in $G$. For a non-trivial element $\gamma \in \Gamma$,   the
conjugacy class $C_{\gamma}$ is disjoint from $K$.  
\end{lemma}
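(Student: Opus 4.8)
The plan is to argue by contradiction: suppose that $C_{\gamma}\cap K\neq\emptyset$ for some non-trivial $\gamma\in\Gamma$, so that there is an element $g\in G$ with $g^{-1}\gamma g=k$ for some $k\in K$. The strategy is to show that this forces $\gamma$ to generate a finite subgroup of $\Gamma$, which contradicts torsion-freeness. The whole argument hinges on the interplay between three properties: $K$ is compact, $\Gamma$ is discrete, and $\Gamma$ is torsion-free.

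First I would exploit compactness of $K$. Since $k\in K$, the cyclic subgroup $\langle k\rangle$ is contained in $K$, and because $K$ is closed in $G$ the closure $\overline{\langle k\rangle}$ is a closed subset of the compact group $K$, hence compact. Thus $\langle k\rangle$ is relatively compact in $G$. Conjugation by the fixed element $g$ is a homeomorphism of $G$, so $\langle\gamma\rangle=g\,\langle k\rangle\,g^{-1}$ has compact closure $g\,\overline{\langle k\rangle}\,g^{-1}$; that is, the cyclic group generated by $\gamma$ is relatively compact in $G$.

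Next I would bring in discreteness. As a subgroup of the discrete group $\Gamma$, the set $\langle\gamma\rangle$ is a discrete subset of $G$. A discrete subset contained in a relatively compact set must be finite, since an infinite subset of a compact set has a limit point, contradicting discreteness. Therefore $\langle\gamma\rangle$ is finite, which means $\gamma$ has finite order. Finally, invoking that $\Gamma$ is torsion-free forces $\gamma=e$, contradicting the assumption that $\gamma$ is non-trivial. Hence no conjugate of a non-trivial $\gamma$ can lie in $K$, i.e. $C_{\gamma}\cap K=\emptyset$.

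There is no serious obstacle in this argument; it is essentially a clean combination of standard facts. The one point requiring a little care is the passage from ``relatively compact'' to ``finite,'' where one must observe that relative compactness is preserved under the conjugation homeomorphism and that discreteness of $\langle\gamma\rangle$ is inherited from discreteness of $\Gamma$. Everything else is routine.
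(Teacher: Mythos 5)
Your argument is correct and is essentially the paper's proof: both reduce to the fact that a conjugate of $\langle\gamma\rangle$ lies in the compact group $K$ while being discrete, hence finite, so $\gamma$ is torsion, contradicting torsion-freeness. You merely conjugate back to $\Gamma$ before invoking discreteness, whereas the paper applies it directly inside $K$; the content is the same.
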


\begin{proof}  The group  $x^{-1}\gamma x \cap\ K$ is discrete and
contained in the compact group $K$, hence finite.  Consequently,
$x^{-1}\gamma x$ is of finite order in $G$ and hence $\gamma$ is of
finite order. Since $\Gamma$ is torsion-free,  $\gamma$ is the
identity element of $G$. 
\end{proof}

\begin{lemma} \label{e} If $\gamma \neq e$, then $e \notin
KC_{\gamma}K$.
\end{lemma}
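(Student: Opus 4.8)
The plan is to argue by contradiction, reducing the claim immediately to Lemma~\ref{gamma}. First I would suppose, to the contrary, that $e \in KC_{\gamma}K$. Unwinding the definition of the $K\times K$-saturation, this means there exist $k_1, k_2 \in K$ and $c \in C_{\gamma}$ with $e = k_1 c k_2$. Since $K$ is a subgroup of $G$, I can solve for $c$: namely $c = k_1^{-1} k_2^{-1}$, which again lies in $K$. Hence $c \in C_{\gamma} \cap K$, so this intersection is non-empty.

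But this directly contradicts Lemma~\ref{gamma}, which asserts that for a non-trivial element $\gamma$ of the torsion-free uniform lattice $\Gamma$, the conjugacy class $C_{\gamma}$ is disjoint from $K$. This contradiction establishes that $e \notin KC_{\gamma}K$, as desired.

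I do not anticipate any genuine obstacle here: all the real content is already packaged in Lemma~\ref{gamma}, whose proof uses compactness of $K$, discreteness of $\Gamma$, and the torsion-free hypothesis to force any putative element of $C_{\gamma}\cap K$ to be of finite order, hence trivial. The only point to observe is the elementary equivalence $e \in KC_{\gamma}K \iff C_{\gamma}\cap K \neq \emptyset$: the reverse implication follows because if $c \in C_{\gamma}\cap K$ then $e = c^{-1}\cdot c \cdot e$ with $c^{-1}, e \in K$, and the forward implication is exactly the computation above, both resting only on the fact that left/right multiplication by $K$ and inversion preserve membership in the subgroup $K$.
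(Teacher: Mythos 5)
Your proposal is correct and is essentially the paper's own argument: the paper likewise writes $e = k x^{-1}\gamma x l$, deduces $x^{-1}\gamma x = k^{-1}l^{-1} \in K$, and invokes Lemma~\ref{gamma} for the contradiction. No differences worth noting.
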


\begin{proof} Let $x \in G$ and $k,l \in K$ be such that $ k x^{-1}
\gamma x l = e$.  Then $x^{-1} \gamma x \in K$, which is not possible
by Lemma ~\ref{gamma}.
\end{proof}

\begin{proposition} \label{B}  There exists an open set $B$ in $G$
such that $C_{\gamma} \cap B$ is empty for all  $\gamma \in \Gamma_1
\cup \Gamma_2$ and $B$ is stable under $K\times K$ action on $G$ given
by equation \ref{biKaction}. 
\end{proposition}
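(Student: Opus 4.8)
The plan is to produce $B$ as a $K\times K$-stable open neighbourhood of the identity from which the identity itself---equivalently its entire $K\times K$-orbit $K$---has been deleted. The guiding principle, as the introduction suggests, is that only finitely many conjugacy classes of $\Gamma_1\cup\Gamma_2$ approach the identity, so near $e$ one has to avoid only finitely many obstructions, each of which is closed and misses $e$; this is what replaces the single closed measure-zero set $E$ of Corollary \ref{E}.

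First I would fix a relatively compact open neighbourhood $\Omega_0$ of $e$ and replace it by its saturation $\Omega=K\Omega_0 K$, which is again open and relatively compact and is now stable under the action (\ref{biKaction}); note $e\in\Omega$, hence $K\subseteq\Omega$. Applying Lemma \ref{berard} to $\Gamma_1$ and to $\Gamma_2$ with the relatively compact set $\overline{\Omega}$ shows that only finitely many $G$-conjugacy classes $[\gamma]_G$, $\gamma\in\Gamma_1\cup\Gamma_2$, meet $\Omega$; since $\Omega$ is $K\times K$-stable, these are exactly the classes whose saturations $KC_\gamma K$ meet $\Omega$. Discarding the trivial class, list the nontrivial ones as $C_1,\dots,C_m$. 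Each $C_i$ is closed by Lemma \ref{conj}, and a short compactness argument (if $k_n^{-1}c_nl_n\to g$ with $c_n\in C_i$, pass to convergent subsequences of $k_n,l_n$ and use that $C_i$ is closed) shows each saturation $KC_iK$ is closed as well; moreover $e\notin KC_iK$ by Lemma \ref{e}. Hence
\[
W \;=\; \Omega\,\cap\,\bigcap_{i=1}^{m}\bigl(G\setminus KC_iK\bigr)
\]
is an open, $K\times K$-stable neighbourhood of $e$, and by construction it meets no $C_\gamma$ with $\gamma\neq e$: such a class is either one of the $C_i$, in which case $W$ avoids $KC_iK\supseteq C_i$, or it is already disjoint from $\Omega\supseteq W$.

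It then remains to remove the identity: set $B=W\setminus K$. Since $K$ is closed and is itself a $K\times K$-orbit (the orbit of $e$ under (\ref{biKaction})), the set $B$ is open and $K\times K$-stable, and it is nonempty because $K$ is nowhere dense in $G$ (a proper closed submanifold of lower dimension) while $W$ is a nonempty open set. Finally $B\cap C_\gamma=\emptyset$ for every $\gamma\in\Gamma_1\cup\Gamma_2$: for $\gamma\neq e$ this holds because $B\subseteq W$, while for $\gamma=e$ we have $C_e=\{e\}\subseteq K$ and $B\cap K=\emptyset$. I expect the delicate point---the reason this is harder than Corollary \ref{E}---to be the passage to $K\times K$-saturations: one must know both that each $KC_iK$ remains closed and misses $e$, and that excising the whole orbit $K$ (forced by $K\times K$-stability once $e$ is excluded) still leaves a nonempty open set. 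The finiteness supplied by Lemma \ref{berard} is exactly what keeps the intersection defining $W$ finite and so prevents the nontrivial saturations from accumulating at the identity.
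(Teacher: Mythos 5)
Your proposal is correct and follows essentially the same route as the paper: saturate a relatively compact neighbourhood of $e$ under $K\times K$, use Lemma \ref{berard} to reduce to finitely many classes, observe that their saturations $KC_\gamma K$ are closed and miss $e$ (Lemma \ref{e}), shrink to a $K\times K$-stable neighbourhood avoiding them, and finally delete $K$ itself. The only cosmetic differences are that you prove closedness of $KC_\gamma K$ by a direct subsequence argument where the paper invokes properness of $G\to G//K$, and you build the stable neighbourhood by intersecting with complements of saturations where the paper first chooses $V$ and then saturates it as $KVK$.
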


\begin{proof}  Let $U'$ be a relatively compact open neighborhood of
$e$ in $G$.  Let $U = KU'K$. Then $U$ is relatively compact and hence
it intersects atmost finitely many conjugacy classes $C_{\gamma}$.
Since the map $G\to G//K$ is proper and the conjugacy class
$C_{\gamma}$ is closed, the set $KC_{\gamma}K$ is closed in $G$. Since
$U$ is $K$-stable, $KC_{\gamma}K \cap\ U$ is non-empty if and only if
$C_{\gamma}\cap U$ is non-empty.  Hence,  the set $E = \bigcup
\limits_{\gamma \neq e}[KC_{\gamma}K]\ \cap U$, being a finite union
of closed sets,  is a $K\times K$-stable closed subset of $U$. By
Lemma \ref{e}, the identity element $e$ does not belong to $E$. Choose
an open set $V \subseteq U$  containing $e$ such that  $E \cap V =
\emptyset$.  Let $B = KVK \cap K^{c}$, where $K^c$ is the complement
of $K$ in $G$.  It can be seen that B satisfies the desired  property.
\end{proof}

Now we give the proof of Theorem ~\ref{prin}.
\begin{proof} By hypothesis of Theorem ~\ref{prin}, there exists a
finite subset $S$ of $\widehat{G}_{s}$ such that 
\[m(\pi, \Gamma_1) = m(\pi, \Gamma_2)\ \ \forall\ \pi \notin S.\] Let
$f \in C^{\infty}_{c}(G//K)$. Since $f$ is bi-$K$-invariant,
$\chi_{\pi}(f) = 0$ if $\pi \notin \widehat{G}_{s} $.  Using the
Selberg trace formula for $f$ , we get :
\begin{equation}\label{trf5}
\begin{split} \sum \limits_{\pi\ \in\ S} \ t_{\pi} \chi_{\pi}(f) =
\sum \limits_{\substack{[\gamma]\ \in \ [\Gamma_{1}]_{G}\ \cup\
[\Gamma_{2}]_{G}}} (a(\gamma,\Gamma_1) - a(\gamma,\Gamma_2))\
O_{\gamma}(f) 
\end{split}
\end{equation} Let $\phi = \sum \limits_{\alpha\ \in\ S} t_{\pi}
\phi_{\pi}$. By using proposition ~\ref{warn}, we get :
\[ \int \limits_{G} f(g)\ \phi(g)\ d\mu(g) = \sum
\limits_{\substack{[\gamma]\ \in \ [\Gamma_{1}]_{G}\ \cup\
[\Gamma_{2}]_{G}  }} (a(\gamma,\Gamma_1) - a(\gamma,\Gamma_2))\
O_{\gamma}(f).\] Let $B$ be as in the proof of Proposition
~\ref{B}. The term on right hand side in above equation vanishes for
every function $f$ in $C^{\infty}_{c}(G//K)$ which is supported on
$B$. Hence for such functions $f$, 
  \[ \int \limits_{G} f(g)\ \phi(g)\ d\mu(g) = 0.\] By Lemma
\ref{sph}, the functions  $f$ separate points on $B$.  Hence  $\phi$
must vanish on the open subset $B$ of $G$.  Since  $\phi$ is analytic,
it  vanishes on all of G. By the linear independence of functions
$\phi_{\pi}$ (Proposition ~\ref{warn}), we conclude that
\[m(\pi, \Gamma_1) = m(\pi, \Gamma_2)\ \ \forall\ \pi \in
\widehat{G_s}.\]
\end{proof}

\section{\bf{Proof of Theorem ~\ref{laplacian}}}\label{prooflaplacian}
We now proceed to derive  Theorem ~\ref{laplacian} from Theorem
~\ref{prin}. We follow the notation given in the introduction. Let
$\pi$ be an irreducible, unitary,  spherical representation of
$G$. Let $e_{\pi}$ be a $K$-fixed vector of unit length in $\pi$.  The
associated spherical function $\phi_{\pi}$ is an eigenfunction of
$D(G/K)$ with eigencharacter $\lambda_{\pi}$:
\[ D(\phi_{\pi})=\lambda_{\pi}(D)\phi_{\pi} \quad D\in D(G/K).\] The
main observation is the following proposition. 

\begin{proposition}\label{pro1} Let $\Gamma$ be a torsion-free uniform
lattice in $G$.  Let $\pi$ be an irreducible, unitary  spherical
representation of $G$. Then 
\[m(\pi, \Gamma ) = \text{dim}\ (V(\lambda_{\pi},\Gamma)).\] In
particular,  $V (\lambda_{\pi},\Gamma)$ is finite dimensional.

 Conversely, if $\lambda$ is a character of $D(G/K)$ and the dimension
of $V (\lambda_{\pi},\Gamma)$ is positive, then $\lambda =
\lambda_{\pi}$ for some spherical representation $\pi$ of $G$.
\end{proposition}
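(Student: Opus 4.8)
The plan is to realize $V(\lambda,\Gamma)$ as a $D(G/K)$-eigenspace inside the $K$-invariant part of $L^2(\Gamma\backslash G)$, and to match the action of $D(G/K)$ there with the derived right regular action of $U(\mathfrak{g})^K$. First I would identify $C^{\infty}(X_\Gamma)=C^{\infty}(\Gamma\backslash G/K)$ with the space $C^{\infty}(\Gamma\backslash G)^K$ of smooth right-$K$-invariant functions, under which an invariant differential operator $D\in D(G/K)$ acts by the derived action $R(u)$ of any element $u\in U(\mathfrak{g})^K$ realizing it (the ambiguity modulo $U(\mathfrak{g})\mathfrak{k}$ acting as zero on $K$-invariant functions). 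Since $\Gamma$ is cocompact, the discrete decomposition $L^2(\Gamma\backslash G)=\widehat{\bigoplus}_{\pi} m(\pi,\Gamma)\,H_\pi$ holds; passing to $K$-fixed vectors, the $\pi$-isotypic component of $L^2(\Gamma\backslash G)^K$ has dimension $m(\pi,\Gamma)\cdot\dim H_\pi^K$, which equals $m(\pi,\Gamma)$ when $\pi$ is spherical and $0$ otherwise, using that $\dim H_\pi^K\le 1$.

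The crux is to compute the $D(G/K)$-action on this component. Fix a $G$-equivariant embedding of one copy of a spherical $\pi$ into $L^2(\Gamma\backslash G)$, let $v$ be the image of the unit $K$-fixed vector $e_\pi$, and let $u\in U(\mathfrak{g})^K$ realize $D$. Because $u$ is $\mathrm{Ad}(K)$-invariant, $R(u)v$ is again $K$-fixed, and since $H_\pi^K$ is one dimensional it is a scalar multiple of $v$; evaluating the matrix coefficient $g\mapsto\langle\pi(g)\pi(u)e_\pi,e_\pi\rangle$ and comparing with the defining eigenvalue equation $D\phi_\pi=\lambda_\pi(D)\phi_\pi$ identifies that scalar with $\lambda_\pi(D)$. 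Hence $D(G/K)$ acts on the entire $\pi$-isotypic part of $C^{\infty}(\Gamma\backslash G)^K$ by the scalars $\lambda_\pi(D)$, i.e. as the eigencharacter $\lambda_\pi$.

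I would then note that the smooth eigenspace $V(\lambda,\Gamma)$ coincides with the corresponding $L^2$-eigenspace: since $D(G/K)$ contains the elliptic Laplace--Beltrami operator, any $L^2$-eigenfunction is smooth by elliptic regularity, while smooth functions on the compact space $X_\Gamma$ are automatically $L^2$, and the $K$-fixed vectors in the copies of $\pi$ are smooth as $K$-finite, $Z(\mathfrak{g})$-finite vectors. Combining this with the previous step, an eigenfunction with character $\lambda$ can have nonzero isotypic components only along spherical $\pi$ with $\lambda_\pi=\lambda$, so $V(\lambda,\Gamma)=\bigoplus_{\pi:\,\lambda_\pi=\lambda} m(\pi,\Gamma)\,H_\pi^K$. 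By the distinctness of the $\lambda_\pi$ for inequivalent spherical representations recorded in Proposition \ref{warn}, at most one $\pi$ contributes, giving $m(\pi,\Gamma)=\dim V(\lambda_\pi,\Gamma)<\infty$; and the converse follows at once, since $V(\lambda,\Gamma)\neq 0$ forces some spherical $\pi$ with $\lambda_\pi=\lambda$ to occur in $L^2(\Gamma\backslash G)$.

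I expect the main obstacle to be the identification carried out in the second paragraph: carefully matching the intrinsic action of the invariant differential operators $D(G/K)$ on functions on the locally symmetric space with the derived right regular action of $U(\mathfrak{g})^K$, and pinning the resulting scalar to $\lambda_\pi(D)$ through the spherical function. The remaining ingredients---discreteness of the spectrum for cocompact $\Gamma$, one-dimensionality of $H_\pi^K$, automatic smoothness of $K$-finite $Z(\mathfrak{g})$-finite vectors, and injectivity of $\pi\mapsto\lambda_\pi$---are standard or already available in the excerpt.
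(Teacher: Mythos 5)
Your proposal is correct and follows essentially the same route as the paper's own proof (the Gelfand--Graev--Pyatetskii-Shapiro duality argument): identify $V(\lambda,\Gamma)$ inside the $K$-invariants of the discrete decomposition of $L^2(\Gamma\backslash G)$, use the surjection $\mathfrak{U}(\mathfrak{g})^K\to D(G/K)$ and the one-dimensionality of $H_\pi^K$ to see that each spherical isotypic component contributes eigenfunctions with character $\lambda_\pi$, and invoke the injectivity of $\pi\mapsto\lambda_\pi$ to isolate a single $\pi$. You are in fact somewhat more careful than the paper on two points it glosses over --- the elliptic-regularity argument identifying the smooth eigenspace with the $L^2$ one, and the explicit computation pinning the scalar by which $D$ acts to $\lambda_\pi(D)$ via the spherical function --- but the underlying argument is the same.
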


\begin{remark}
 When $G =  PSL(2,\R)$ this is the duality theorem
proved by Gelfand, Graev and Pyatetskii-Shapiro  (\cite[p.50]{GGP}),
relating  the spectrum of the Laplace-Beltrami on the upper half plane
and the multiplicities of spherical representations of $PSL(2,\R)$
occurring in $L^2(\Gamma\backslash PSL(2, \R))$. We follow their
proof. The above fact is probably  well known to the experts but we
have included a proof for sake of completeness.  The proof also
indicates that Theorems \ref{prin} and Theorem \ref{laplacian} are
equivalent.
\end{remark}
 
\begin{proof}   Let $ \mathfrak{g}$ be the complexification of Lie
algebra of $G$ consisting of left invariant vector fields on $G$. Let
$\mathfrak{U}(\mathfrak{g})^{K}$ be the $K$-invariant subspace of the
universal enveloping algebra $\mathfrak{U}(\mathfrak{g})$ under the
right action of $K$ on $\mathfrak{U}(\mathfrak{g})$. We consider the
right action of $G$ on itself. This gives raise to a surjective map
from $\mathfrak{U}(\mathfrak{g})^{K}$ to $D(G/K)$ (\cite[page 52,
Proposition 1.7.5]{GV}).  Hence it can be seen that $D(e_{\pi})$  is a
$K$-fixed vector for each $D \in D(G/K)$. Since the dimension of the
space of $K$-fixed vectors of $\pi$ is one, it follows that $e_{\pi}$
is an eigenvector of  $D(G/K)$ with respect to the eigencharacter
$\lambda_{\pi}$ i.e. it lies in the eigenspace in $V(\lambda_{\pi},
\Gamma)$. Therefore, we conclude that $ m(\pi, \Gamma ) \leq
\text{dim}\ (V(\lambda_{\pi},\Gamma))$.

Conversely let $f \in C^{\infty}(X)$ be an eigenvector of some
character $\lambda$ of $D(G/K)$. Since 
\[L^{2}(\Gamma \backslash G) = \bigoplus_{\pi\ \in\ \widehat{G}} m
\left(\pi, \Gamma \right)\ \pi\] we write 
\begin{equation}\label{fdecomp}  f = \sum \limits_{\pi\ \in\
\widehat{G}} a_{\pi}\ v_{\pi}
\end{equation} such that $v_{\pi} \in \pi$ is a vector of unit
length. Let $W$ be the space of $K$-invariants of $L^{2}(\Gamma
\backslash G)$.  Let $P_W$ be the orthogonal projection of
$L^{2}(\Gamma \backslash G)$ onto $W$.  Since $f$ is right invariant
under $K$, $P_{W}(f) = f$. Hence we get :
\[ f = \sum \limits_{\pi\ \in\ \widehat{G}} a_{\pi}\ P_{W}(v_{\pi})\]

The algebra $D(G/K)$ is generated by essentially self-adjoint
differential operators. Hence, if the character $\lambda_{\pi}$ is
distinct from $\lambda$,  there exists an essentially self-adjoint
$D\in D(G/K)$ such that $\lambda_{\pi}(D)\neq \lambda(D)$.  Hence the
eigenvectors $v_{\pi}$ and $f$ are orthogonal. If $\pi$ is not a
spherical representation, $P_{W}(v_{\pi}) = 0$.  Hence the indexing
set in equation (\ref{fdecomp}) is restricted to those irreducible
unitary spherical representations with character $\lambda_{\pi}$ equal
to $\lambda$. 
  
Since the associated spherical functions to inequivalent
representations are linearly  independent,  the characters are
distinct. Hence we conclude that there is an unique irreducible
unitary spherical representation $\pi$ of $G$ such that $\lambda =
\lambda_{\pi}$. Hence,
\[ m(\pi, \Gamma ) = \text{dim}\ (V (\lambda_{\pi},\Gamma)).\]
\end{proof}

Now we give the proof of Theorem ~\ref{laplacian}.  Let $T$ be a
finite subset of characters of $D(G/K)$ such that \[\text{dim}\ ( V
(\lambda,\Gamma_1) ) = \text{dim}\ (V(\lambda,\Gamma_2))\] for all
characters $\lambda \notin T$. By above Proposition ~\ref{pro1}, we
get that :
\[ m(\pi,\Gamma_{1}) =  m(\pi, \Gamma_{2})\]  for all but finitely
many irreducible, unitary spherical representations of $G$. Hence
using Theorem ~\ref{prin} and Proposition ~\ref{pro1}, we get a proof
of Theorem \ref{laplacian}.

\end{document}